\newtheorem{theoreme}{Theorem}[section]
\newtheorem{lemme}[theoreme]{Lemma}
\newtheorem{proposition}[theoreme]{Proposition}
\newtheorem{definition}[theoreme]{Definition}
\newtheorem{remarque}[theoreme]{Remark}
\newtheorem{corollaire}[theoreme]{Corollary}
\title{Remarks on approximate decompositions of the diagonal}
\author{Ren\'e Mboro}
\date{}
\begin{document}

\begin{abstract} In this paper, we investigate, for varieties over $\mathbb C$ with trivial group of $0$-cycles, the gap between essential $\mathrm{CH}_0$-dimension $2$ and essential $\mathrm{CH}_0$-dimension $0$. In particular, we present sufficient (and necessary) conditions for a variety with trivial group of $0$-cycles and essential $\mathrm{CH}_0$-dimension $\leq 2$ to have, in fact, essential $\mathrm{CH}_0$-dimension $0$.
\end{abstract}
\maketitle
\section{Introduction} The characterization of the complex smooth projective varieties of dimension $n\geq 3$ which are rational, i.e. birationally equivalent to $\mathbb P^n_{\mathbb C}$, is a long standing problem in algebraic geometry. If such a variety $X$ is rational then, by birational invariance of the group $\mathrm{CH}_0$ of $0$-cycles modulo rational equivalence, we have an isomorphism $\mathrm{CH}_0(X)=\mathbb Zx$ for a (any) point $x\in X(\mathbb C)$.\\
\indent However a more general class of varieties satisfy the isomorphism $\mathrm{CH}_0(X)\simeq \mathbb Z$. Among them, we can mention the class of rationally connected varieties, for which any pair of point is contained in a rational curve,  or the (conjecturally) smaller class of unirational varieties, i.e. varieties admitting a dominant rational map from a projective space, which, in dimension $\leq 2$ coincide with the class of rational varieties. In dimension $\geq 3$, efficient obstructions to rationality for unirational varieties have been found since the early $1970's$: Clemens and Griffiths (\cite{Cl-Gr}) proved that the intermediate Jacobian of a cubic threefold $X$, which is a unirational variety, is not isomorphic to a sum of Jacobian of curves which would be the case should $X$ be rational; in a $1989$ paper (\cite{CT-O}), Colliot-Th\'el\`ene and Ojanguren studied higher degree analog of the obstruction that Artin and Mumford used to provide an example of a unirational non stably rational threefold, namely the unramified cohomology groups, which are birational invariants trivial for stably rational varieties, and used them to prove non stable rationality of some unirational quadric bundles.\\
\indent In fact, the birational invariance of the $\mathrm{CH}_0$ group yields more for a rational variety $X$; for any field extension $L/\mathbb C$, we have, as for the projective space, $\mathrm{CH}_0(X)= \mathbb Zx_L$ where $x\in X(\mathbb C)$ and $x_L=x\times_{\mathbb C}Spec(L)$. The varieties satisfying this property are said to have universally trivial $\mathrm{CH}_0$ group or have their $0$-cycles universally supported on a point. We can reformulate this property, after (\cite{A-CT-P}), saying that there exists a Chow-theoretic decomposition of the diagonal i.e. than we can write the diagonal of $X$ as :
\begin{equation}\Delta_X = X\times x + Z\ \mathrm{in\ CH}^n(X\times X) 
\end{equation} 
where $x\in X(\mathbb C)$ and $Z$ is a cycle supported on $D\times X$ for some proper closed algebraic subset $D$ of $X$. In \cite{Vois_unirat}, Voisin proved the existence of a Chow theoretic decomposition of the diagonal to be a strictly subtler birational invariant than Clemens-Griffiths and the non triviality of unramified cohomology groups criteria by exhibiting examples of unirational varieties whose intermediate Jacobian is isomorphic to a Jacobian of curve (Clemens-Griffiths criterion), having trivial unramified cohomology groups and which do not admit a Chow theoretic decomposition of the diagonal or equivalently, whose $0$-cycles are not universally supported on a point.\\
\indent For a variety $X$ satisfying $\mathrm{CH}_0(X)=\mathbb Z$, we can also consider a weaker property than having the $0$-cycles universally supported on a point: let us say that the $\mathrm{CH}_0$ is universally supported on a subvariety $Y\subset X$ if the natural morphism $\mathrm{CH}_0(Y)\rightarrow \mathrm{CH}_0(X)$ is universally surjective, i.e. for any $L/\mathbb C$, $\mathrm{CH}_0(Y_L)\rightarrow \mathrm{CH}_0(X_L)$ is surjective. 
\begin{remarque}\label{rmk_intro} \normalfont An equivalent formulation is the existence of a decomposition of the diagonal
\begin{equation}\label{eq_decomp_approx_princ} \Delta_X=\Gamma_1 + \Gamma_2\ \mathrm{in\ CH}^n(X\times X)
\end{equation}
where $\Gamma_1$ is supported on $D\times X$ for some proper closed subset $D\subset X$ and $\Gamma_2$ is supported $X\times Y$. Indeed, applying universal surjectivity to the field $L=\mathbb C(X)$, we see that the diagonal point $\delta_X\in \mathrm{CH}_0(X_{\mathbb C(X)})$ can be written $i_{L,*}z$ in $\mathrm{CH}_0(X_{\mathbb C(X)})\simeq \mathrm{CH}^n(X_{\mathbb C(X)})$, where $z\in \mathrm{CH}_0(Y_L)$ and $i:Y\hookrightarrow X$. This implies (\ref{eq_decomp_approx_princ}), using Bloch identity $\mathrm{CH}^n(X_{\mathbb C(X)})= \varinjlim_{U\subset X,\ open} \mathrm{CH}^n(U\times X)$ and the localization exact sequence.
\end{remarque}
Voisin made in \cite{Vois_cub} the following definition:
\begin{definition}\textit{(\cite[Definition 1.2]{Vois_cub})}The essential $\mathrm{CH}_0$-dimension of a variety is the minimal integer $k$ such that there is a closed subscheme $Y\subset X$ of dimension $k$ such that the $\mathrm{CH}_0$ group of $X$ is universally supported on $Y$.
\end{definition}
In the case of cubic hypersurfaces, the following is proved in \cite{Vois_cub}:
\begin{theoreme}\textit{(\cite[Theorem 1.4]{Vois_cub})} The essential $\mathrm{CH}_0$-dimension of a very general $n$-dimensional cubic hypersurface over $\mathbb C$ is either $n$ or $0$, for $n=4$ or $n$ odd.
\end{theoreme}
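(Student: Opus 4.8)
The plan is to reformulate the statement through the equivalence of Remark~\ref{rmk_intro}: for a smooth cubic $X\subset\mathbb P^{n+1}$ one has $\mathrm{CH}_0(X)=\mathbb Z$, so the only content of the essential $\mathrm{CH}_0$-dimension is the minimal dimension of a subvariety $Y$ on which $\mathrm{CH}_0$ is universally supported, and such a $Y$ of dimension $k$ exists if and only if there is a decomposition
\begin{equation*}
\Delta_X=\Gamma_1+\Gamma_2\ \mathrm{in}\ \mathrm{CH}^n(X\times X),
\end{equation*}
with $\Gamma_1$ supported on $D\times X$ for a proper closed $D\subset X$ and $\Gamma_2$ supported on $X\times Y$. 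Since the essential $\mathrm{CH}_0$-dimension is always $\leq n$, the theorem amounts to a single implication: if the very general cubic has essential $\mathrm{CH}_0$-dimension $<n$, then it is $0$. So I would fix a very general $X$, assume it admits a decomposition as above with $k=\dim Y<n$ minimal, and aim to improve it to the case $Y=\{\mathrm{pt}\}$.

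The first substantive step is cohomological. Letting the correspondences act gives $\mathrm{id}=[\Gamma_1]_*+[\Gamma_2]_*$, and since $H^i(X)=H^i(\mathbb P^{n+1})$ for $i\neq n$, everything happens on the primitive middle piece $H^n_{\mathrm{prim}}(X,\mathbb Q)$. The crucial feature of ``very general'' is that $H^n_{\mathrm{prim}}(X,\mathbb Q)$ is an irreducible Hodge structure: the monodromy of the universal family of cubics acts irreducibly on primitive middle cohomology, and for $X$ outside a countable union of proper subvarieties of the moduli space every sub-Hodge-structure of $H^n_{\mathrm{prim}}(X)$ is monodromy-invariant, hence $0$ or everything. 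Now $\mathrm{im}\,[\Gamma_2]_*$ is a sub-Hodge-structure which, because $\Gamma_2$ factors through a resolution of the $k$-dimensional $Y$, has coniveau $\geq n-k$. I would record that this alone does not settle the matter: already for $n=3$ a support of dimension $2$ is cohomologically permitted, since $H^3_{\mathrm{prim}}$ has coniveau $1$ and is realized on the Fano surface of lines. So the cohomological dichotomy is a necessary constraint, but the exclusion of the intermediate range must ultimately be cycle-theoretic.

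The heart of the proof, and the step I expect to be the main obstacle, is to upgrade the cohomological information to the Chow group and genuinely reduce the dimension of the support. I would spread the decomposition out over the family of cubics, use the irreducibility of the monodromy to make the support ``generic'' rather than tied to a particular $X$, and then exploit the geometry of lines to push $Y$ down to a point. The basic relation is that three collinear points $x_1,x_2,x_3$ cut out on $X$ by a line satisfy $x_1+x_2+x_3=3\mathfrak o$ in $\mathrm{CH}_0$, and through a general point of $X$ there passes a positive-dimensional family of lines; the plan is to use this flexibility to rewrite the generic $0$-cycle supported on $Y$ (with $\dim Y\geq1$) as one supported in strictly smaller dimension, iterating until the support is $0$-dimensional and then collapsing it to a single point using $\mathrm{CH}_0(X)=\mathbb Z$. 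This is exactly where the hypothesis $n=4$ or $n$ odd is used: these are the cubics whose Hodge structure ($K3$-type for $n=4$, of odd weight and controlled by the intermediate Jacobian for $n$ odd) and whose variety of lines carry enough algebraic-cycle structure to make the reduction go through, whereas in the remaining dimensions the line geometry does not suffice to close the argument.

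Finally I would emphasize what the theorem does and does not assert: it excludes the intermediate values $1,\dots,n-1$ and leaves the genuine dichotomy between $n$ and $0$, the choice between which is the (generally open) rationality-type problem for cubics --- resolved as $n$ for $n=3$ by Clemens--Griffiths, and famously open for $n=4$. Assembling the three steps --- the reduction to the single implication, the monodromy/irreducibility constraint on $H^n_{\mathrm{prim}}$, and the line-geometric reduction of the support to a point --- yields the stated dichotomy.
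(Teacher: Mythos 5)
First, note that the paper you are working against does not prove this statement at all: it is quoted as background from \cite[Theorem 1.4]{Vois_cub}, so there is no internal proof to compare with, and the comparison has to be made with Voisin's original argument. Your reduction of the theorem to the single implication ``essential $\mathrm{CH}_0$-dimension $<n$ implies essential $\mathrm{CH}_0$-dimension $0$'' is correct, and you have correctly identified one genuine ingredient: for the very general cubic in the stated range of $n$, Beauville's monodromy density theorem forces $\mathrm{End}_{HS}(H^n(X,\mathbb Q)_{prim})=\mathbb Q\,\mathrm{Id}$, which is exactly the ingredient the present paper reuses in Lemma \ref{lem_monodr_quart} for complete intersection threefolds.

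However, the decisive step of your proposal is missing, and the mechanism you sketch in its place cannot work as stated. You propose to use the collinearity relation $x_1+x_2+x_3=3\mathfrak{o}$ and the lines through a general point to ``rewrite the generic $0$-cycle supported on $Y$ as one supported in strictly smaller dimension, iterating until the support is $0$-dimensional.'' As described, this procedure makes no use of the hypothesis $\dim Y<n$: if such an iteration existed it could be started from $Y=X$ itself and would show that \emph{every} smooth cubic hypersurface has universally trivial $\mathrm{CH}_0$, which is not what the theorem asserts --- the whole content of the statement is a genuine dichotomy whose resolution (e.g.\ for $n=3$, via the algebraicity of the minimal class on $J^3(X)$, or for $n=4$, via the integral Hodge conjecture on the Fano variety of lines) remains a separate and largely open problem. (Also, for $n=3$ the family of lines through a general point of the cubic is finite, not positive-dimensional.) The actual proof in \cite{Vois_cub} is of a different nature: it starts from an identity, valid for any smooth cubic and obtained from the ``third intersection point of the line'' construction, expressing a nonzero multiple of $\Delta_X$ as $P^t\circ P$ plus correspondences supported on $D\times X$ and $X\times D'$ (where $P\subset F(X)\times X$ is the incidence correspondence of lines); it combines this with the assumed decomposition $\Delta_X=\Gamma_1+\Gamma_2$, uses $\mathrm{End}_{HS}(H^n_{prim})=\mathbb Q\,\mathrm{Id}$ to force the relevant self-correspondences to act as scalars on the middle cohomology, and then carries out a careful integral (torsion and lattice) analysis relating the existence of the decomposition to the algebraicity of certain cohomology classes on $F(X)$; the restriction to $n=4$ or $n$ odd enters precisely in that last analysis. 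None of this is present in your outline, so what you have is a plausible plan with the central argument left as an acknowledged gap, not a proof.
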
  

In \cite{CT_cub}, Colliot-Th\'el\`ene proved the following proposition concerning varieties of essential $\mathrm{CH}_0$-dimension $\leq 1$:

\begin{theoreme}\textit{(\cite[Proposition 2.5]{CT_cub})}\label{thm_CT_1} Let $X$ a smooth projective variety over $\mathbb C$ satisfying $\mathrm{CH}_0(X)=\mathbb Z$. Assume $X$ has essential $\mathrm{CH}_0$-dimension $\leq 1$. Then the essential $\mathrm{CH}_0$-dimension of $X$ is $0$ (i.e. $X$ admits a Chow-theoretic decomposition of the diagonal).
\end{theoreme}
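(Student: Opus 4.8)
The plan is to prove that essential $\mathrm{CH}_0$-dimension $\leq 1$ forces essential $\mathrm{CH}_0$-dimension $0$ by analyzing the curve on which the $0$-cycles are universally supported. Let me sketch the geometry. We have $\mathrm{CH}_0(X)=\mathbb{Z}$ and a (possibly reducible, possibly singular) subscheme $Y\subset X$ of dimension $\leq 1$ such that $\mathrm{CH}_0(Y_L)\to \mathrm{CH}_0(X_L)$ is surjective for all $L/\mathbb{C}$. The key point is that after replacing $Y$ by a resolution/normalization and taking an appropriate curve, one reduces to studying $\mathrm{CH}_0$ of a smooth projective curve, whose behavior under base change is completely controlled by its Jacobian.

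First I would record, via Remark~\ref{rmk_intro} applied with $L=\mathbb{C}(X)$, the decomposition $\Delta_X=\Gamma_1+\Gamma_2$ in $\mathrm{CH}^n(X\times X)$, with $\Gamma_1$ supported on $D\times X$ and $\Gamma_2$ supported on $X\times Y$. I would then push this forward through a resolution: choosing a smooth projective curve $C$ (a disjoint union of smooth models of the one-dimensional components of $Y$) together with a morphism $f\colon C\to X$ whose image contains the relevant components, I can arrange that $\Gamma_2=(\mathrm{id}\times f)_*\Gamma_2'$ for some correspondence $\Gamma_2'\in\mathrm{CH}^{?}(X\times C)$. The upshot is that the action of $\Gamma_2$ on $0$-cycles factors through $\mathrm{CH}_0(C_L)$, so everything is governed by the base change behavior of $\mathrm{CH}_0$ of the smooth curve $C$.

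The heart of the argument is then the following base-change input for smooth curves: for a smooth projective curve $C$ over $\mathbb{C}$, the universally trivial part of $\mathrm{CH}_0$ is controlled by the Albanese/Jacobian, and crucially, whether $\mathrm{CH}_0(C)\to \mathrm{CH}_0(X)$ can be improved to support on a point is detected at the level of the intermediate Jacobian and the cycle class. I would argue that the correspondence $\Gamma_2'$ induces, via its action on the Jacobian $J(C)$, a map to the relevant intermediate Jacobian $J^{?}(X)$, and that the hypothesis $\mathrm{CH}_0(X)=\mathbb{Z}$ together with the Roitman-type torsion statements force this contribution to vanish modulo a point. Concretely, the decomposition lets me analyze the transcendental/nontrivial part of the Chow group as the image of $\mathrm{CH}_0(C)_{\mathrm{alb}=0}$, a divisible group controlled by $J(C)$; I would show this image is killed after passing to a point because $\mathrm{CH}_0(X)$ being $\mathbb{Z}$ leaves no room for a nonzero contribution from the Jacobian of a curve.

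The main obstacle I anticipate is the passage from the curve $C$ back to $X$ while keeping track of base change over \emph{all} field extensions $L/\mathbb{C}$ simultaneously, rather than just over $\mathbb{C}$ or $\mathbb{C}(X)$. The subtlety is that surjectivity of $\mathrm{CH}_0(C_L)\to \mathrm{CH}_0(X_L)$ for all $L$ is a stronger constraint than a single decomposition of the diagonal, and one must ensure the geometric conclusion (support on a point) is itself universal. I expect this is handled by the generic-point formalism: it suffices to produce a \emph{rational} decomposition of the diagonal over the generic point, since such a decomposition spreads out and specializes, and by Remark~\ref{rmk_intro} a point-support decomposition over $\mathbb{C}(X)$ already yields the full Chow-theoretic decomposition of the diagonal. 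The technical care therefore goes into showing that the curve's contribution, once it is seen to be albanese-trivial and hence (by Roitman) torsion-free divisible, must collapse to a multiple of a point-class under the constraint $\mathrm{CH}_0(X)=\mathbb{Z}$.
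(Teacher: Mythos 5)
The paper does not prove this statement itself --- it quotes it from Colliot-Th\'el\`ene \cite[Proposition 2.5]{CT_cub} and there is no internal proof to compare with --- so your proposal has to be measured against the known argument. Your setup is right (lift $\Gamma_2$ to a correspondence $\widetilde{\Gamma}_2$ on $X\times\widetilde{C}$ for a smooth projective curve $\widetilde{C}\to X$; the zero-dimensional components of $Y$ cause no trouble), but there is a genuine gap exactly at the step you yourself flag as the ``heart of the argument.'' The key fact you never use is that $\widetilde{\Gamma}_2$ lies in $\mathrm{CH}^1(X\times\widetilde{C})=\mathrm{Pic}(X\times\widetilde{C})$. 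Since $\mathrm{CH}_0(X)=\mathbb Z$ forces $H^1(X,\mathcal O_X)=0$ and $\mathrm{Alb}(X)=0$ (Bloch--Srinivas/Mumford), there are no nontrivial divisorial correspondences between $X$ and $\widetilde{C}$, so $\mathrm{Pic}(X\times\widetilde{C})=pr_1^*\mathrm{Pic}(X)\oplus pr_2^*\mathrm{Pic}(\widetilde{C})$. Hence $\widetilde{\Gamma}_2=pr_1^*D'+pr_2^*z$ \emph{integrally}, and its pushforward to $X\times X$ is a cycle supported on $\mathrm{Supp}(D')\times X$ plus $X\times f_*z$, where $f_*z$ is a multiple of a point because $\mathrm{CH}_0(X)=\mathbb Z$ (the multiple is $1$ by letting both sides of the decomposition act on a general point). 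No nilpotence, torsion, or divisibility arguments are needed.

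Your substitute for this step --- that the image of $\mathrm{CH}_0(C)_{\mathrm{alb}=0}$ dies because ``$\mathrm{CH}_0(X)$ being $\mathbb Z$ leaves no room for a nonzero contribution from the Jacobian of a curve'' --- only controls the map $\mathrm{Pic}^0(\widetilde{C})(\mathbb C)\to\mathrm{CH}_0(X)$, i.e.\ over $\mathbb C$. What is actually needed is the vanishing of $f_{L,*}$ on $\mathrm{Pic}^0(\widetilde{C})(L)$ for $L=\mathbb C(X)$, and nothing in your sketch delivers that: $\mathrm{Pic}^0(\widetilde{C})(L)$ is not divisible for non-closed $L$, a Bloch--Srinivas spreading argument kills the contribution only after multiplication by some integer $N$, and Roitman's theorem controls torsion over $\bar L$, not over $L$ (the kernel of $\mathrm{CH}_0(X_L)\to\mathrm{CH}_0(X_{\bar L})$ is torsion but not obviously zero). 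The ``generic-point formalism'' you invoke converts between diagonal decompositions and statements over $\mathbb C(X)$; it does not make the Jacobian contribution vanish. The missing idea is the codimension-one nature of the lifted correspondence, which reduces everything to the elementary structure of the Picard group of a product; also, the ``intermediate Jacobian'' plays no role here.
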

\indent The goal of this paper is to investigate how far a variety with trivial $\mathrm{CH}_0$, which has essential $\mathrm{CH}_0$ dimension $\leq 2$ is from having a universally trivial $\mathrm{CH}_0$ group i.e. essential $\mathrm{CH}_0$ dimension $0$.
A first result in this direction is the following:
\begin{theoreme}\textit{(\cite[Corollary 2.2]{Vois_cub}, \cite[Proposition 1.9]{A-CT-P})} Let $\Sigma$ be a smooth complex projective surface. Assume $\mathrm{CH}_0(\Sigma)=\mathbb Z$ and $\mathrm{Tors}(H^*(\Sigma,\mathbb Z))=0$. Then $\Sigma$ has $\mathrm{CH}_0$ universally trivial.
\end{theoreme}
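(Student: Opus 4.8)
The plan is to reduce the statement, via the equivalence recorded in Remark \ref{rmk_intro}, to the single assertion that the diagonal point equals a constant point in the $\mathrm{CH}_0$ group of the generic fibre, and then to produce the \emph{integral} decomposition of the diagonal by first fixing its cohomology class and afterwards clearing denominators using the torsion-freeness hypothesis. Write $n=2$, $K=\mathbb C(\Sigma)$, and fix $x\in\Sigma(\mathbb C)$. By the Remark it suffices to show that $\zeta:=\delta_\Sigma-x_K=0$ in $\mathrm{CH}_0(\Sigma_K)$, where $\delta_\Sigma$ is the diagonal point; equivalently, that $\Sigma$ admits a decomposition $\Delta_\Sigma=\Sigma\times x+Z$ in $\mathrm{CH}^2(\Sigma\times\Sigma)$ with $Z$ supported on $D\times\Sigma$ for a curve $D\subset\Sigma$.

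First I would extract the cohomological consequences of $\mathrm{CH}_0(\Sigma)=\mathbb Z$. The Albanese map $\mathrm{CH}_0(\Sigma)_{\deg 0}\to \mathrm{Alb}(\Sigma)$ is surjective, so $\mathrm{Alb}(\Sigma)=0$ and $q(\Sigma)=0$; by Mumford's theorem $p_g(\Sigma)=0$. Hence $H^{2,0}=H^{0,2}=0$, so $H^2(\Sigma,\mathbb C)=H^{1,1}$, and with $\mathrm{Tors}(H^*(\Sigma,\mathbb Z))=0$ the Lefschetz $(1,1)$-theorem gives $H^2(\Sigma,\mathbb Z)=\mathrm{NS}(\Sigma)$, a torsion-free group generated by classes of curves. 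Together with $b_1=2q=0$ and Poincar\'e duality this yields $H^0=H^4=\mathbb Z$, $H^1=H^3=0$, and $H^2$ free, unimodular, and algebraic.

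Next, Bloch--Srinivas applied to $\mathrm{CH}_0(\Sigma)_{\mathbb Q}=\mathbb Q$ produces a decomposition $\Delta_\Sigma=\Sigma\times x+Z$ in $\mathrm{CH}^2(\Sigma\times\Sigma)_{\mathbb Q}$ with $Z$ supported on $D\times\Sigma$; equivalently $\zeta$ is a \emph{torsion} class in $\mathrm{CH}_0(\Sigma_K)_{\deg 0}$. To promote this to an integral statement I would first match cohomology classes. Since $H^*(\Sigma,\mathbb Z)$ is torsion-free the K\"unneth formula holds integrally with no $\mathrm{Tor}$ terms, so
\[
H^4(\Sigma\times\Sigma,\mathbb Z)=\big(H^0\otimes H^4\big)\oplus\big(H^2\otimes H^2\big)\oplus\big(H^4\otimes H^0\big),
\]
all summands torsion-free. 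The $H^0\otimes H^4$ component of $[\Delta_\Sigma]$ is exactly $[\Sigma\times x]$; the $H^4\otimes H^0$ component is $[x\times\Sigma]$; and, $H^2=\mathrm{NS}(\Sigma)$ being unimodular and generated by curve classes, the middle component $\sum_i\alpha_i\otimes\alpha_i^\vee$ is the class of an integral cycle supported on a union of products of curves. Choosing $D$ to be a curve through $x$ containing all these curves, I obtain an integral cycle $Z_0$ supported on $D\times\Sigma$ with $[\Delta_\Sigma-\Sigma\times x-Z_0]=0$ in $H^4(\Sigma\times\Sigma,\mathbb Z)$. Setting $\Gamma:=\Delta_\Sigma-\Sigma\times x-Z_0$ (which restricts to $\zeta$ on the generic fibre, since $Z_0|_\eta=0$), the question is reduced to showing that $\zeta$, now simultaneously torsion and integrally homologically trivial, vanishes.

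The main obstacle is exactly this final vanishing, and it is here that $\mathrm{Tors}(H^*(\Sigma,\mathbb Z))=0$ is indispensable. Over $\bar K$, Roitman's theorem identifies the torsion of $\mathrm{CH}_0(\Sigma_{\bar K})_{\deg 0}$ with the torsion of $\mathrm{Alb}(\Sigma)(\bar K)=0$, so $\zeta_{\bar K}=0$ and $\zeta$ dies over a finite extension $K'/K$; a norm argument then bounds its order. To conclude that $\zeta$ itself vanishes I would compare it with its \'etale cycle class: for each prime $\ell$ the $\ell$-torsion of $\mathrm{CH}_0(\Sigma_K)_{\deg 0}$ injects into an \'etale cohomology group of $\Sigma_K$ whose relevant graded pieces are assembled, through the Hochschild--Serre spectral sequence, from $H^*(\Sigma_{\bar K},\mathbb Z/\ell)$; torsion-freeness of $H^*(\Sigma,\mathbb Z)$ forces these to be the reductions of the algebraic classes in $H^2$, leaving no transcendental torsion able to support $\zeta$. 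Hence $\zeta=0$, which gives the integral decomposition of the diagonal and, by Remark \ref{rmk_intro}, universal triviality of $\mathrm{CH}_0(\Sigma)$. I expect the reduction and the K\"unneth step to be formal once $p_g=q=0$ is established, with all the real difficulty concentrated in the torsion/descent argument of this last paragraph.
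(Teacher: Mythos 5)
Your reduction to the vanishing of $\zeta=\delta_\Sigma-x_K$ and your cohomological work are fine: $q=p_g=0$, integral K\"unneth with no $\mathrm{Tor}$ terms, and the identification of the $(0,4)$, $(4,0)$ and $(2,2)$ components with classes of cycles supported on $\Sigma\times x$ and $D\times\Sigma$ are exactly the computations that the paper's proof of Theorem \ref{thm_main1} carries out in greater generality. The gap is entirely in your last paragraph, and it is not a technical gap but a missing idea. Knowing that $\Gamma=\Delta_\Sigma-\Sigma\times x-Z_0$ is homologically trivial and that $\zeta$ is torsion does not force $\zeta=0$: a homologically trivial correspondence can act nontrivially on $0$-cycles (this is precisely the content of the Bloch conjecture), and your \'etale-cohomological descent does not close. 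Concretely, after you observe $\zeta_{\bar K}=0$ via Roitman, the class of $\zeta$ under the Bloch map lands in the first step of the Hochschild--Serre filtration on $H^3_{et}(\Sigma_K,\mathbb Q_\ell/\mathbb Z_\ell(2))$, whose relevant graded piece is $H^1\big(\mathrm{Gal}(\bar K/K),\,\mathrm{NS}(\Sigma)\otimes\mathbb Q_\ell/\mathbb Z_\ell(1)\big)$. This group is \emph{not} killed by the torsion-freeness of $H^*(\Sigma,\mathbb Z)$ --- by Kummer theory it is built from $\varinjlim_n K^*/(K^*)^{\ell^n}$, which is nonzero for $K=\mathbb C(\Sigma)$ --- so "no transcendental torsion able to support $\zeta$" is not a valid conclusion; showing that the class of $\zeta$ in this graded piece vanishes is essentially the original problem. (Your norm argument is also redundant: Bloch--Srinivas already gives $N\zeta=0$.)

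The step you are missing is the one the paper isolates as \cite[Proposition 2.1]{Vois_cub}: first upgrade homological to algebraic triviality of $\Gamma$ by applying \cite[Theorem 1~(ii)]{Bl-Sr} to $\Sigma\times\Sigma$ (whose $\mathrm{CH}_0$ is supported on $\{pt\}\times\Sigma$, of dimension $2$), and then invoke the Voevodsky--Voisin nilpotence theorem (\cite{Voe}, \cite{Vois_nilp}) for algebraically trivial self-correspondences: $\Gamma^{\circ N}=0$ in $\mathrm{CH}^2(\Sigma\times\Sigma)$, and expanding this power using $(\Delta_\Sigma-\Sigma\times x)^{\circ 2}=\Delta_\Sigma-\Sigma\times x$ together with the support conditions yields the integral Chow-theoretic decomposition, hence $\zeta=0$. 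Without this (or the genuinely different $K$-theoretic route of \cite{A-CT-P}, which replaces nilpotence by a careful analysis of torsion in $\mathrm{CH}_0$ of surfaces over non-closed fields via Merkurjev--Suslin), your argument establishes only a decomposition of the diagonal modulo homological equivalence, which is strictly weaker than the assertion.
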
 
We show the following generalizations:
\begin{theoreme}\label{thm_main1} Let $X$ be a smooth complex projective variety of dimension $n$ such that $\mathrm{CH}_0(X)=\mathbb Z$. Assume $X$ has essential $\mathrm{CH}_0$ dimension $\leq 2$. If
\begin{enumerate}
\item\label{cond1_thm1} $\mathrm{Tors}(H^2(X,\mathbb Z))=0$ and
\item\label{cond2_thm1} $H^3(X,\mathbb Z)=0$,
\end{enumerate}
then $X$ has universally trivial $\mathrm{CH}_0$ group i.e. has essential $\mathrm{CH}_0$ dimension $0$.
\end{theoreme}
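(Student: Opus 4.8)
The plan is to separate the problem into a rational part, which is automatic, and an integral (torsion) part, which carries all the content. Since $\mathrm{CH}_0(X)=\mathbb Z$ and $\mathbb C$ is a universal domain, Bloch--Srinivas provides an integer $N>0$ and a decomposition $N\Delta_X=N(X\times x)+Z$ in $\mathrm{CH}^n(X\times X)$ with $Z$ supported on $D\times X$ for a proper closed $D\subsetneq X$. Letting this act on $\mathrm{CH}_0(X_L)$ and moving $0$-cycles off $D$, one gets $N\alpha=0$ for every $\alpha\in\mathrm{CH}_0(X_L)_{\deg 0}$ and every $L/\mathbb C$, so $\mathrm{CH}_0(X_L)_{\deg 0}$ is torsion for all $L$. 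The same decomposition forces $H^i(X,\mathcal O_X)=0$ for $i\ge1$, so by Hodge symmetry and the two hypotheses $H^1(X,\mathbb Z)=0$, $H^2(X,\mathbb Z)$ is torsion-free of pure type $(1,1)$ (hence $=\mathrm{NS}(X)$), and $H^3(X,\mathbb Z)=0$. It thus remains only to prove that $\mathrm{Tors}\,\mathrm{CH}_0(X_L)_{\deg 0}=0$ for every $L$.

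To bring in the essential dimension I would use Remark \ref{rmk_intro} to write $\Delta_X=\Gamma_1+\Gamma_2$ with $\Gamma_1$ supported on $D\times X$ and $\Gamma_2$ supported on $X\times Y$, $\dim Y\le 2$, and then resolve $Y$: choosing a smooth projective surface $S$ with a morphism $\iota\colon S\to X$ of image $Y$ and moving the relevant $0$-cycles onto the smooth locus, one rewrites $\Gamma_2=(\mathrm{id}_X\times\iota)_*\Psi$ for some $\Psi\in\mathrm{CH}^2(X\times S)$. As $\Gamma_1$ acts as $0$ on $0$-cycles, this yields the identity $\iota_*\circ\Psi_*=\mathrm{id}$ on $\mathrm{CH}_0(X_L)$ for all $L$, exhibiting $\Psi_*$ as a universal section of $\iota_*$. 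Two cohomological computations then locate the image of this section. First, $H^1(X,\mathbb C)=0$ gives $\mathrm{Alb}(X)=0$, whence $\mathrm{alb}_S\circ\Psi_*=0$ and $\Psi_*\bigl(\mathrm{CH}_0(X_L)_{\deg 0}\bigr)\subset T(S)_L$, the Albanese kernel. Second, since $\Psi\in\mathrm{CH}^2(X\times S)$ and $\dim S=2$, the transpose $[\Psi]^*\colon H^2(S)\to H^2(X)$ is a morphism of Hodge structures of bidegree $(0,0)$, so it sends $H^{2,0}(S)$ into $H^{2,0}(X)=0$: the correspondence $\Psi$ annihilates the holomorphic $2$-forms of $S$.

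Combining $\iota_*\Psi_*=\mathrm{id}$ with the first computation, every torsion class of $\mathrm{CH}_0(X_L)_{\deg 0}$ is the $\iota_*$-image of a torsion class of $T(S)_L$, so the task reduces to controlling \emph{transcendental} torsion $0$-cycles on $S$ that survive to $X$. Over $L=\mathbb C$ this vanishes by Roitman's theorem, consistently with $\mathrm{CH}_0(X)=\mathbb Z$; the real content is the same vanishing over arbitrary $L$, in particular over $\mathbb C(X)$, and I expect this integral step over non-closed fields to be the main obstacle. Here I would invoke the finite-coefficient refinement of the diagonal argument: in this range of degrees the obstruction to upgrading the rational decomposition of the first paragraph to an integral one is governed by $\mathrm{Br}(X)$ and by $\mathrm{Tors}\,H^3(X,\mathbb Z)$ (with the unramified group $H^3_{\mathrm{nr}}$), and the factorization through $S$ transports the corresponding invariants of the transcendental torsion on $S$ into those of $X$ via $\iota_*$ and $[\Psi]^*$ on $\mathbb Z/m$-coefficient cohomology. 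Because $H^{2,0}(X)=0$ and $H^2(X,\mathbb Z)$ is torsion-free, $\mathrm{Br}(X)\cong\mathrm{Tors}\,H^3(X,\mathbb Z)=0$, and $H^3(X,\mathbb Z)=0$ disposes of the remaining degree-$3$ obstruction. This is precisely the point at which the two hypotheses are indispensable, and the argument must mirror, relatively over $S$, the proof of the surface statement (\cite[Cor.~2.2]{Vois_cub}, \cite[Prop.~1.9]{A-CT-P}) that torsion-free integral cohomology together with $\mathbb Q$-triviality yields integral universal triviality.

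Once this transcendental torsion is shown to die in $X$, we obtain $\mathrm{Tors}\,\mathrm{CH}_0(X_L)_{\deg 0}=0$ for all $L$, hence $\mathrm{CH}_0(X_L)_{\deg 0}=0$ and $X$ has universally trivial $\mathrm{CH}_0$. Alternatively, the same analysis can be arranged to prove only that the image of $\Psi_*$ is universally supported on a curve in $S$, i.e. that the essential $\mathrm{CH}_0$-dimension of $X$ drops to $\le 1$; Theorem \ref{thm_CT_1} of Colliot-Th\'el\`ene then concludes that it is $0$, which would let the final torsion bookkeeping be absorbed into that result.
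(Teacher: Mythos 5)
Your first two paragraphs are sound and match the paper's starting point: the decomposition $\Delta_X=\Gamma_1+\Gamma_2$ through a surface, its lift $\Psi\in\mathrm{CH}^2(X\times S)$, and the standard consequences of Bloch--Srinivas ($N$-torsion of $\mathrm{CH}_0(X_L)_{\deg 0}$, $H^{i,0}(X)=0$, $H^1(X,\mathbb Z)=0$, algebraicity of $H^2(X,\mathbb Z)$). But the third paragraph, which is where the theorem actually has to be proved, is not an argument. You assert that ``the obstruction to upgrading the rational decomposition \dots to an integral one is governed by $\mathrm{Br}(X)$ and by $\mathrm{Tors}\,H^3(X,\mathbb Z)$ (with the unramified group $H^3_{\mathrm{nr}}$)'' and then dispose of these groups. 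That is precisely what is false in general: vanishing of the Brauer group and of unramified cohomology is a \emph{necessary} condition for an integral decomposition of the diagonal, not a sufficient one. The introduction of this very paper recalls Voisin's unirational threefolds, which have trivial unramified cohomology (and satisfy the Clemens--Griffiths criterion) yet admit no Chow-theoretic decomposition of the diagonal. So ``killing the obstruction groups'' cannot close the argument; some additional mechanism must convert the hypothesis of essential $\mathrm{CH}_0$-dimension $\leq 2$ into an integral statement, and your proposal never supplies it. Working with torsion $0$-cycles in $T(S)_L$ over arbitrary $L$ is exactly the unstructured side of the problem where no known tool applies directly.

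The paper's proof exploits a feature your reduction sets up but does not use: $\widetilde{\Gamma}_2$ is a cycle of \emph{codimension $2$} on $X\times\widetilde{\Sigma}$. First, Voisin's nilpotence proposition (\cite[Proposition 2.1]{Vois_cub}) reduces the Chow-theoretic decomposition to a decomposition modulo algebraic equivalence. Second, since $\mathrm{CH}_0(X\times\widetilde{\Sigma})$ is supported in dimension $2$, Bloch--Srinivas gives that algebraic and homological equivalence coincide on $\mathrm{CH}^2(X\times\widetilde{\Sigma})$, so it suffices to decompose the integral cohomology class $[\widetilde{\Gamma}_2]\in H^4(X\times\widetilde{\Sigma},\mathbb Z)$. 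Third, hypothesis (\ref{cond1_thm1}) (together with $\mathrm{Tors}\,H^3=0$, which follows from the decomposition) makes the $\mathrm{Tor}_1$ terms in the K\"unneth sequence vanish, so $[\widetilde{\Gamma}_2]$ splits into K\"unneth components $\delta^{i,j}$, each of which is then handled by the cohomological vanishings you already established: $\delta^{3,1}=0$ by hypothesis (\ref{cond2_thm1}), $\delta^{1,3}=0$ since $H^1(X,\mathbb Z)=0$, $\delta^{0,4}=[X\times z]$, $\delta^{4,0}$ is represented by a cycle not dominating $X$, and $\delta^{2,2}$ is algebraic and non-dominating by Lefschetz $(1,1)$ since $H^{2,0}(X)=0$ and $\mathrm{Tors}\,H^2(X,\mathbb Z)=0$. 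This passage to codimension $2$ on the product is the missing idea; without it (or an equivalent substitute) your outline does not yield a proof.
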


%The starting point of our work is similar to the one of \cite[Corollary 2.2]{Vois_cub}. However, an important difference lies in the fact that in the surface case, the assumptions imply that $H^3(\Sigma,\mathbb Z)=0$ while in higher dimension, having $\mathrm{CH}_0$ universally supported on a surface, will only imply that $H^3(X,\mathbb Z)$ has coniveau $1$ and that $\mathrm{CH}^2(X)_{hom}\simeq J^3(X)$. 
In another direction, we have the following result. Let us introduce first the following condition:\\

\indent \emph{(*) there is a smooth projective variety }$\widetilde{Y}$\emph{ of dimension }$(dim(X)-1)$\emph{ and a morphism }$j:\widetilde{Y}\rightarrow X$ \emph{such that} $$j_*:\mathrm{Pic}^0(\widetilde{Y})\rightarrow \mathrm{CH}^2(X)_{alg}$$\emph{ is universally surjective.}\\

\indent 
%As usual, universally here means that $j_{L,*}:\mathrm{Pic}^0(\widetilde{Y}_L)=\mathrm{Pic}^0(\widetilde{Y})_L\rightarrow \mathrm{CH}^2(X_L)_{alg}$ is surjective for any extension $L/\mathbb C$.
In more geometric terms, the condition means that any family of algebraically trivial codimension $2$ cycles factors generically through $j_*$. Indeed, let $\mathcal{Z}\in \mathrm{CH}^2(B\times X)$ be such a family, parametrized by a smooth projective base $B$. Applying condition (*) to the field $\mathbb C(B)$, gives that the cycle $\mathcal{Z}_{\mathbb C(B)}\in \mathrm{CH}^2(X_{\mathbb C(B)})_{alg}= \varinjlim_{U\subset B,\ open}\mathrm{CH}^2(U\times X)_{alg}$ has a pre-image $D\in \mathrm{Pic}^0(Y)(\mathbb C(B))$ by $j_{\mathbb C(B),*}$. The $\mathbb C(B)$-point $D$ corresponds naturally to a rational map (thus a morphism since $\mathrm{Pic}^0(Y)$ is an abelian variety) $\mathcal D:B\rightarrow \mathrm{Pic}^0(Y)$. The identity $j_{\mathbb C(B),*}(D)=\mathcal{Z}_{\mathbb C(B)}$ says that the applications $B(\mathbb C)\rightarrow \mathrm{CH}^2(X)_{alg}$, given by $b\mapsto \mathcal{Z}_b$ and $B(\mathbb C)\stackrel{\mathcal D}{\rightarrow}\mathrm{Pic}^0(Y)(\mathbb C)\stackrel{j_*}{\rightarrow}\mathrm{CH}^2(X)_{alg}\simeq J^3(X)$ coincide on a dense open set of $B$ hence everywhere since targets are abelian varieties.\\
\indent Conversely, any cycle $\mathcal{Z}_L\in \mathrm{CH}^2(X_L)_{alg}$ has a model $\mathcal Z$ which is a family of algebraically trivial codimension $2$ cycles of $X$ parametrized by a smooth quasi-projective model $B$ of $L$. The factorization of that family through $j_*$ gives rise to a morphism $\mathcal D:B\rightarrow \mathrm{Pic}^0(Y)$ which, passing to the limit over the open sets of $B$ yields a $\mathbb C(B)=L$ point of $\mathrm{Pic}^0(Y)$ mapped by $j_{L,*}$ to $\mathcal{Z}_L$.\\
\indent Now, let us state the second theorem of the paper:

\begin{theoreme}\label{thm_main2} Let $X$ be a smooth complex projective variety of dimension $n$ such that $\mathrm{CH}_0(X)=\mathbb Z$. Assume $X$ has essential $\mathrm{CH}_0$ dimension $\leq 2$. If $X$ satisfies the condition (*), then $X$ has universally trivial $\mathrm{CH}_0$ group i.e. has essential $\mathrm{CH}_0$ dimension $0$.
\end{theoreme}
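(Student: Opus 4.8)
The plan is to use the hypothesis to upgrade the approximate decomposition coming from essential $\mathrm{CH}_0$-dimension $\leq 2$ into a decomposition of essential $\mathrm{CH}_0$-dimension $\leq 1$, and then to invoke Theorem \ref{thm_CT_1} to descend to dimension $0$. We may assume the supporting subvariety $Y$ has dimension exactly $2$ (if $\dim Y\leq 1$ we are already in the situation of Theorem \ref{thm_CT_1}). By Remark \ref{rmk_intro} there is a decomposition $\Delta_X=\Gamma_1+\Gamma_2$ in $\mathrm{CH}^n(X\times X)$ with $\Gamma_1$ supported on $D\times X$ for some proper closed $D\subset X$, and $\Gamma_2$ supported on $X\times Y$. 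First I would fix a resolution $\gamma\colon S\to Y\hookrightarrow X$ with $S$ a smooth projective surface and lift $\Gamma_2$ to a class $\Gamma_2'\in\mathrm{CH}^2(X\times S)$ with $(\mathrm{id}_X\times\gamma)_*\Gamma_2'=\Gamma_2$ (a cycle supported on $X\times Y$ is the pushforward of one on $X\times S$). I then regard $\Gamma_2'$ as a family of codimension $2$ cycles on $X$ parametrized by $S$, writing $\Gamma_2'(s):=\Gamma_2'|_{X\times\{s\}}\in\mathrm{CH}^2(X)$; fixing $s_0\in S(\mathbb C)$ and setting $c:=\Gamma_2'(s_0)$, the centered family $\mathcal Z:=\Gamma_2'-pr_X^*c$ is a family of \emph{algebraically trivial} codimension $2$ cycles, where $pr_X\colon X\times S\to X$ is the first projection.

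Next I would feed $\mathcal Z$ into condition (*), in the geometric form discussed before the statement (with base $B=S$): the family factors generically, hence everywhere, through $j_*$, giving a morphism $\mathcal D\colon S\to\mathrm{Pic}^0(\widetilde Y)$ with $j_*\mathcal D(s)=\mathcal Z(s)$ for all $s$, in particular at the generic point $\eta_S$. Pulling back a Poincar\'e divisor on $\widetilde Y\times\mathrm{Pic}^0(\widetilde Y)$ along $\mathrm{id}_{\widetilde Y}\times\mathcal D$ yields a class $\mathcal W\in\mathrm{CH}^1(\widetilde Y\times S)$ with $\mathcal W|_{\widetilde Y\times\{s\}}=\mathcal D(s)$. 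Setting
\[
R:=\Gamma_2'-pr_X^*c-(j\times\mathrm{id}_S)_*\mathcal W\ \in\ \mathrm{CH}^2(X\times S),
\]
the generic fibre is $R|_{X\times\{\eta_S\}}=\big(\Gamma_2'(\eta_S)-c\big)-j_*(\mathcal D(\eta_S))=0$ in $\mathrm{CH}^2(X_{\mathbb C(S)})$. By Bloch's identity $\mathrm{CH}^2(X_{\mathbb C(S)})=\varinjlim_{U\subset S}\mathrm{CH}^2(X\times U)$ together with the localization sequence, this forces $R$ to be supported on $X\times T$ for some proper closed $T\subsetneq S$ with $\dim T\leq 1$.

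I would then push the identity $\Gamma_2=(\mathrm{id}_X\times\gamma)_*\big(pr_X^*c+(j\times\mathrm{id}_S)_*\mathcal W+R\big)$ forward to $X\times X$ and examine the three resulting terms. The first is $(\mathrm{id}_X\times\gamma)_*pr_X^*c=c\times\gamma_*[S]$, supported on $|c|\times X$ with $|c|$ of codimension $\geq 2$; the second is $(\mathrm{id}_X\times\gamma)_*(j\times\mathrm{id}_S)_*\mathcal W=(j\times\gamma)_*\mathcal W$, supported on $j(\widetilde Y)\times X$ with $j(\widetilde Y)$ a divisor. Both are of the form ``supported on $D'\times X$'' for a proper closed $D'\subset X$, and can be absorbed together with $\Gamma_1$. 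The last term $(\mathrm{id}_X\times\gamma)_*R$ is supported on $X\times\gamma(T)$ with $\dim\gamma(T)\leq 1$. Altogether this produces
\[
\Delta_X=\big(\text{cycle supported on }D'\times X\big)+\big(\text{cycle supported on }X\times C\big),\qquad C:=\gamma(T),\ \dim C\leq 1,
\]
which by Remark \ref{rmk_intro} shows that $\mathrm{CH}_0(X)$ is universally supported on the curve $C$; that is, $X$ has essential $\mathrm{CH}_0$-dimension $\leq 1$. Since $\mathrm{CH}_0(X)=\mathbb Z$, Theorem \ref{thm_CT_1} then delivers essential $\mathrm{CH}_0$-dimension $0$, as desired.

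The crux, and the step I expect to require the most care, is the application of condition (*) over the generic point $\eta_S$ of the surface: it is precisely this that makes the ``moving'' part of $\Gamma_2'$ coincide, generically on $S$, with a pushforward of divisors from $\widetilde Y$, so that the corrected class $R$ has trivial generic fibre and hence lives over a curve in $S$. The second delicate point is the bookkeeping on supports after pushing forward: the two correction terms land on a product $D'\times X$ exactly because $\widetilde Y$ has codimension $1$ in $X$ (so $(j\times\gamma)_*\mathcal W$ sits over the divisor $j(\widetilde Y)$) and because $c$ has codimension $2$ (so $c\times\gamma_*[S]$ sits over $|c|$). It is this compatibility that makes the improvement land in essential $\mathrm{CH}_0$-dimension $\leq 1$ rather than merely $\leq 2$, allowing the final reduction to Theorem \ref{thm_CT_1}.
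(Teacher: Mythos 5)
Your proposal is correct and follows essentially the same route as the paper: center the lifted correspondence $\Gamma_2'$ at a point to obtain a family of algebraically trivial codimension $2$ cycles over the surface, apply condition (*) over its function field, use localization to confine the discrepancy to $X\times(\text{curve})$, and finish with Theorem \ref{thm_CT_1}. The only (harmless) difference is that the paper runs the argument separately on each connected component of the resolved surface, where your single base point $s_0$ and generic point $\eta_S$ would otherwise need to be chosen componentwise.
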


\begin{remarque} \normalfont We observe, conversely, that if $X$ has universally trivial $\mathrm{CH}_0$ group, then the conditions (\ref{cond1_thm1}) of Theorem \ref{thm_main1} and (*) are satisfied (see Lemma \ref{lem_recip_main_thm}).
\end{remarque}
The key condition $(*)$ appearing in the theorem is expressed in terms of universal generation, a notion introduced by Shen in \cite{Shen_univ}, where he uses universal generation of $1$-cycles on cubic hypersurfaces (of dimension $\geq 3$) to relate the existence of a decomposition of the diagonal for cubic $3$-folds and $4$-folds to the algebraicity of some cohomological classes on their Fano varieties of lines associated to the pairing in the middle cohomology of cubic hypersurfaces.\\
\indent In the case of threefolds, another relation between essential $\mathrm{CH}_0$-dimension and condition (*) is presented in Section \ref{applications}. Combined with Theorem \ref{thm_main2}, it yields the following result:
\begin{theoreme}\label{thm_intro_csq} The essential $\mathrm{CH}_0$-dimension of a very general Fano complete intersection threefold is $0$ or $3$. In particular, the essential $\mathrm{CH}_0$-dimension of the very general quartic threefold is equal to $3$.
%\indent (ii) The essential $\mathrm{CH}_0$-dimension of the very general quartic double solid is equal $3$. The same is true for the very general sextic double solid.
\end{theoreme}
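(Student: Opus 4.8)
The plan is to combine the reduction statements already available with a verification of condition (*), and to treat the intermediate Jacobian only for the final numerical claim. First I would record that a Fano variety is rationally connected, so $\mathrm{CH}_0(X)=\mathbb Z$ for every smooth Fano complete intersection threefold $X$; thus the standing hypothesis of Theorems \ref{thm_main1} and \ref{thm_main2} holds. The essential $\mathrm{CH}_0$-dimension then lies in $\{0,1,2,3\}$, and Theorem \ref{thm_CT_1} collapses the value $1$ onto $0$, so it lies in $\{0,2,3\}$. Moreover $X$, being a complete intersection, has torsion-free integral cohomology, so condition (\ref{cond1_thm1}) of Theorem \ref{thm_main1} is automatic; however $H^3(X,\mathbb Z)\neq 0$ in general (already for the cubic and the quartic), so condition (\ref{cond2_thm1}) fails and Theorem \ref{thm_main1} is unavailable. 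The entire problem is therefore to rule out the value $2$, that is, to show that essential $\mathrm{CH}_0$-dimension $\leq 2$ forces $0$; by Theorem \ref{thm_main2} this amounts to establishing condition (*) for $X$.

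To produce condition (*) I would use the threefold result of Section \ref{applications}. Starting from a diagonal decomposition $\Delta_X=\Gamma_1+\Gamma_2$ as in Remark \ref{rmk_intro}, with $\Gamma_1$ supported on $D\times X$ and $\Gamma_2$ on $X\times Y$, $\dim D,\dim Y\leq 2$, acting on $\mathrm{CH}^2(X)_{alg}$ gives $\mathrm{id}=\Gamma_{1*}+\Gamma_{2*}$. After resolving $Y$, the term $\Gamma_{2*}$ factors through the pushforward $\mathrm{Pic}^0(\widetilde Y)\to\mathrm{CH}^2(X)_{alg}$ of divisor classes, which is exactly the shape required by (*). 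The term $\Gamma_{1*}$ factors, after resolving $D$, as $z\mapsto \tau^*z\in\mathrm{CH}_0(\widetilde D)$ followed by a correspondence $\gamma_1\in\mathrm{CH}^2(\widetilde D\times X)$; since $\mathrm{CH}_0(X)=\mathbb Z$ (Bloch--Srinivas), the induced map $\mathrm{CH}_0(\widetilde D)_{alg}\to\mathrm{CH}^2(X)_{alg}\simeq J^3(X)$ is regular. The crux is to re-express this $\Gamma_1$-contribution as the $\mathrm{Pic}^0$-pushforward of a surface mapping to $X$, and to do so compatibly with base change; one then lets $\widetilde Y'$ be a resolution of the union of $\widetilde Y$ with the surfaces carrying the $\Gamma_1$-contribution, so that $\mathrm{Pic}^0(\widetilde Y')\to\mathrm{CH}^2(X_L)_{alg}$ is surjective for every $L/\mathbb C$, the identity $\mathrm{id}=\Gamma_{1*}+\Gamma_{2*}$ persisting after base change. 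With (*) in hand, Theorem \ref{thm_main2} gives essential $\mathrm{CH}_0$-dimension $0$, and the dichotomy $\{0,3\}$ follows.

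For the quartic I would exclude the value $0$. If $X$ had universally trivial $\mathrm{CH}_0$, it would admit an integral cohomological decomposition of the diagonal; since $H^*(X,\mathbb Z)$ is torsion-free, this forces $(J^3(X),\Theta)$ to be isomorphic, as a principally polarized abelian variety, to a product of Jacobians of curves (the threefold form of the Clemens--Griffiths obstruction, via the factorization of $\mathrm{id}_{H^3(X)}$ through $H^1$ of a surface). But for the very general quartic threefold $\dim J^3(X)=h^{2,1}(X)=30$, and the polarized Hodge structure $H^3(X)$ is general enough that $J^3(X)$ is not isogenous to a product of Jacobians of curves; hence no decomposition of the diagonal exists, the value $0$ is excluded, and the essential $\mathrm{CH}_0$-dimension equals $3$.

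The main obstacle is the verification of (*), namely the \emph{universal} generation of algebraically trivial $1$-cycles by a single surface. The $\Gamma_1$-contribution does not descend to a surface for free: at the Chow level it only presents $\Gamma_{1*}z$ as a combination of fibre curves of threefolds $W_i\subset\widetilde D\times X$, i.e. again as $1$-cycles on threefolds, and the reduction to the $\mathrm{Pic}^0$ of a surface must remain surjective \emph{on $L$-points} for every extension $L/\mathbb C$ — a condition far stronger than surjectivity of the underlying map of abelian varieties (where e.g. an isogeny would suffice geometrically but not $L$-rationally). This is the Shen-type universal generation statement, and it is precisely here that the geometry of the family of (minimal) rational curves sweeping out the Fano complete intersection, together with the genericity of $X$, must be exploited. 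A secondary but genuine point is the genericity input for the quartic, that the very general intermediate Jacobian is not a product of Jacobians of curves.
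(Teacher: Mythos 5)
Your reduction to verifying condition $(*)$ is the same as the paper's, but the heart of the argument --- actually establishing $(*)$ --- is left as an acknowledged ``main obstacle,'' and the route you gesture at (geometry of minimal rational curves, Shen-type universal generation) is not the one that works and is not carried out. The paper's mechanism is different and is where the ``very general'' hypothesis enters: by Beauville's density of the monodromy group in the symplectic group and Voisin's theorem that the Mumford--Tate group contains a finite-index subgroup of the monodromy group, one gets $\mathrm{End}_{HS}(H^3(X,\mathbb Q))=\mathbb Q\,\mathrm{Id}$ for the very general complete intersection threefold, so every endomorphism of $J^3(X)$ is self-adjoint for the principal polarization. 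One then applies $\Delta_X^*=\Gamma_1^*+\Gamma_2^*$ (upper stars) to $\mathrm{CH}^2(X)_{alg}\simeq J^3(X)$: the term $\Gamma_1^*$ factors through $\mathrm{Pic}^0(\widetilde D)$ because $D$ is a \emph{surface} in the threefold $X$ (this is the dimension symmetry special to $n=3$), and the term $\Gamma_2^*$, being an endomorphism of $J^3(X)$, equals its adjoint $\Gamma_{2,*}$, which factors through $\mathrm{Pic}^0(\widetilde\Sigma)$. Thus $\mathrm{id}=\Gamma_1^*+\Gamma_{2,*}$ exhibits the required universal factorization through $\mathrm{Pic}^0(\widetilde D\sqcup\widetilde\Sigma)$ (Theorem \ref{thm_symm3}). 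Your version, with lower stars throughout, correctly identifies that $\Gamma_{1,*}$ does not descend to a surface for free, but you never supply the self-adjointness trick that resolves this, so the dichotomy is not proved. Note also that your argument nowhere uses very-generality for the dichotomy, which it must.

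The second gap is in the exclusion of the value $0$ for the quartic. A Chow-theoretic (or cohomological) decomposition of the diagonal does \emph{not} force $(J^3(X),\Theta)$ to be a product of Jacobians of curves; it only yields weaker consequences (existence of a universal codimension $2$ cycle, algebraicity of the minimal class, etc.). The general cubic threefold is a counterexample to your implication: it admits a Chow-theoretic decomposition of the diagonal by \cite{Vois_cub}, yet Clemens--Griffiths show its intermediate Jacobian is not a sum of Jacobians. So your Clemens--Griffiths-style exclusion does not apply. The correct input, and the one the paper uses, is the theorem of Colliot-Th\'el\`ene--Pirutka \cite{CTP_quartiques} that the very general quartic threefold admits no Chow-theoretic decomposition of the diagonal (proved by degeneration to a quartic with nontrivial unramified Brauer group, a torsion obstruction invisible to the intermediate Jacobian).
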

\indent The paper is organized as follows: the first section is devoted to the proofs of Theorems \ref{thm_main1} and \ref{thm_main2}. The second section is devoted to the analysis of condition $(*)$; we try to relate it, at least in some special case to a more geometric condition. The third second is devoted to the proof of Theorem \ref{thm_intro_csq}.

\section{Main theorems}
Let us begin this section by a lemma which proves that the conditions (\ref{cond1_thm1}) of Theorem \ref{thm_main1} and (*) are also necessary.
\begin{lemme}\label{lem_recip_main_thm} Let $X$ be a smooth projective variety of dimension $n$ whose essential $\mathrm{CH}_0$ dimension is $0$. Then 
\begin{enumerate} \item\label{cond1_lem1} $\mathrm{Tors}(H^2(X,\mathbb Z))=0=\mathrm{Tors}(H^3(X,\mathbb Z))$ and 
\item\label{cond2_lem1} $X$ satisfies $(*)$.
\end{enumerate}
\end{lemme}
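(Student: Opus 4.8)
The plan is to reduce everything to the integral decomposition of the diagonal and then let the \emph{transpose} of that decomposition act, as a correspondence, on cohomology and on $\mathrm{CH}^2$. Since $X$ has essential $\mathrm{CH}_0$ dimension $0$, the Definition together with Remark \ref{rmk_intro} (applied with $Y$ a point) provides a decomposition $\Delta_X = X\times x + Z$ in $\mathrm{CH}^n(X\times X)$ with $Z$ supported on $D\times X$ for a proper closed $D\subsetneq X$; after enlarging $D$ I may assume it is a divisor. Transposing, and using $\Delta_X={}^t\Delta_X$, gives
\[ \Delta_X = x\times X + Z' \quad\text{in } \mathrm{CH}^n(X\times X), \qquad Z'={}^tZ \text{ supported on } X\times D. \]
I would then choose a resolution $\sigma\colon \widetilde D\to D\hookrightarrow X$ with $\widetilde D$ smooth projective of dimension $n-1$, and lift $Z'$ to a cycle $W\in\mathrm{CH}^{n-1}(X\times\widetilde D)$ with $(\mathrm{id}_X\times\sigma)_*W=Z'$. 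The engine of the whole proof is that, via the projection formula and the identity $pr_2\circ(\mathrm{id}_X\times\sigma)=\sigma\circ q_2$, the action of $Z'$ factors as $Z'_*=\sigma_*\circ W_*$ on every theory. Because $(x\times X)_*$ vanishes on $H^k(X,\mathbb Z)$ for $k\ge 1$ (its class is of pure Künneth type $(2n,0)$) and on $\mathrm{CH}^2(X)_{alg}$ (one may move the point off a codimension-$2$ cycle), the composite $\sigma_*\circ W_*$ is the \emph{identity} on these groups.

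For part (1) I would track cohomological degrees. As $[W]\in H^{2n-2}(X\times\widetilde D,\mathbb Z)$ and $q_2\colon X\times\widetilde D\to\widetilde D$ has $n$-dimensional fibres, $W_*$ lowers degree by $2$, so $W_*\colon H^k(X,\mathbb Z)\to H^{k-2}(\widetilde D,\mathbb Z)$ and $\sigma_*\colon H^{k-2}(\widetilde D,\mathbb Z)\to H^{k}(X,\mathbb Z)$ with $\sigma_*W_*=\mathrm{id}$. Hence $H^k(X,\mathbb Z)$ is a direct summand of $H^{k-2}(\widetilde D,\mathbb Z)$. Taking $k=2$ and $k=3$ realizes $H^2(X,\mathbb Z)$ and $H^3(X,\mathbb Z)$ as summands of $H^0(\widetilde D,\mathbb Z)$ and $H^1(\widetilde D,\mathbb Z)$; since $\widetilde D$ is smooth projective both of these are torsion-free, so $\mathrm{Tors}(H^2(X,\mathbb Z))=\mathrm{Tors}(H^3(X,\mathbb Z))=0$.

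For part (2) I would run the same bookkeeping on Chow groups. Here $W\in\mathrm{CH}^{n-1}(X\times\widetilde D)$ acts as $W_*\colon \mathrm{CH}^2(X)\to\mathrm{CH}^1(\widetilde D)=\mathrm{Pic}(\widetilde D)$ and $\sigma_*\colon\mathrm{CH}^1(\widetilde D)\to\mathrm{CH}^2(X)$, again with $\sigma_*W_*=\mathrm{id}$ on $\mathrm{CH}^2(X)_{alg}$. Correspondences preserve algebraic triviality, so $W_*$ carries $\mathrm{CH}^2(X)_{alg}$ into $\mathrm{Pic}^0(\widetilde D)$, and consequently $\sigma_*\colon\mathrm{Pic}^0(\widetilde D)\to\mathrm{CH}^2(X)_{alg}$ is a (split) surjection. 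Since the decomposition lives in $\mathrm{CH}^n(X\times X)$ and survives base change to $X_L$ for every $L/\mathbb C$, the factorisation $\sigma_{L*}W_{L*}=\mathrm{id}$ persists over $L$, so $\sigma_*$ is \emph{universally} surjective; taking $\widetilde Y=\widetilde D$ and $j=\sigma$ verifies condition (*).

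The one genuinely substantive point — and the step I would justify most carefully — is the passage to the transpose: working with $Z$ on $D\times X$ directly makes the action factor through $H^k(\widetilde D)$ and $\mathrm{CH}^2(\widetilde D)$, which carry torsion and are not governed by $\mathrm{Pic}^0$, so nothing follows. Supporting the \emph{second} factor on the divisor is exactly what forces the correspondence to drop one unit of (co)dimension, landing in the torsion-free groups $H^0(\widetilde D),H^1(\widetilde D)$ and in $\mathrm{Pic}(\widetilde D)$. The residual obstacle is purely technical: producing the integral lift $W$ of $Z'$ along the resolution and checking that the Gysin and pushforward maps, together with the degree shifts, are valid over $\mathbb Z$ — all routine once $D$ has been taken to be a divisor.
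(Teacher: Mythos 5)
Your argument is correct and is essentially the paper's own proof: your $({}^tZ)_*$ acting through the lift $W$ on $X\times\widetilde D$ is literally the $\widetilde Z^*$ used in the paper, and the identity $\gamma=\sigma_{L,*}(W_{L,*}\gamma)$ with $W_{L,*}\gamma\in\mathrm{Pic}^0(\widetilde D_L)$ is exactly how condition $(*)$ is verified there. The only difference is that for part (1) the paper simply cites \cite[Theorem 4.4]{Vois_abel}, whereas you reprove it by the standard splitting $H^k(X,\mathbb Z)\hookrightarrow H^{k-2}(\widetilde D,\mathbb Z)$ for $k=2,3$, which is the argument behind that citation.
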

\begin{proof} Saying that $X$ has essential $\mathrm{CH}_0$-dimension $0$ is equivalent to the existence of a Chow-theoretic decomposition of the diagonal of $X$. We have: \begin{equation}\label{eq_decomp_diag_lemme}\Delta_X =X\times x + Z\ \mathrm{in\ CH}^n(X\times X)\end{equation} where $x\in X(\mathbb C)$ and $Z$ is a cycle supported on $D\times X$ for some proper closed algebraic subset $D$ of $X$. We can choose $D$ such that, denoting $\widetilde{D}$ a desingularization of $D$ and $j:\widetilde{D}\rightarrow X$ the composition of the desingularization followed by the inclusion, the cycle $Z$ lifts to a cycle $\widetilde{Z}\in \mathrm{CH}^{n-1}(\widetilde{D}\times X)$. Item (\ref{cond1_lem1}) is proved in \cite[Theorem 4.4]{Vois_abel}. Let us prove (\ref{cond2_lem1}). Let $L/\mathbb C$ be a field extension and $\gamma\in \mathrm{CH}^2(X_L)_{alg}$. Letting both sides of the extension of (\ref{eq_decomp_diag_lemme}) to $L$ act on $\gamma$, we get the equality: $$\gamma=\Delta_{X_L}^*\gamma= j_{L,*}(\widetilde{Z}_L^*\gamma)\ \mathrm{in\ CH}^2(X_L)$$ with $\widetilde{Z}_L^*\gamma\in \mathrm{CH}^1(\widetilde{D}_L)_{alg}=\mathrm{Pic}^0(\widetilde{D}_L)=\mathrm{Pic}^0(\widetilde{D})_L$, which proves the universal surjectivity of $j_*$.
\end{proof}

\begin{proof}[Proof of Theorem \ref{thm_main1}.] Let us assume that $X$ satisfies conditions (\ref{cond1_thm1}) and (\ref{cond2_thm1}) of the Theorem \ref{thm_main1}. By Remark \ref{rmk_intro}, 
% Using assumption on the essential $\mathrm{CH}_0$ dimension with $L=\mathbb C(X)$, we see that the diagonal point $\delta_X\in \mathrm{CH}_0(X_{\mathbb C(X)})$ can be written $i_{L,*}z$ in $\mathrm{CH}_0(X_{\mathbb C(X)})\simeq \mathrm{CH}^n(X_{\mathbb C(X)})$, where $z\in \mathrm{CH}_0(\Sigma_L)$ and $i:\Sigma\hookrightarrow X$. This implies, using Bloch identity $\mathrm{CH}^n(X_{\mathbb C(X)})= \varinjlim_{U\subset X,\ open} \mathrm{CH}^n(U\times X)$ and the localization exact sequence, that 
the diagonal of $X$ can be written:
\begin{equation}\label{eq_decomp_approx} \Delta_X=\Gamma_1 + \Gamma_2\ \mathrm{in\ CH}^n(X\times X)
\end{equation} 
where $\Gamma_1$ is supported on $D\times X$ for some proper closed subset $D\subset X$ and $\Gamma_2$ is supported $X\times \Sigma$. Let $\tau:\widetilde{\Sigma}\rightarrow \Sigma$ be a desingularization of $\Sigma$. Enlarging $\Sigma$ if necessary we can find $\widetilde{\Gamma}_2\in \mathrm{CH}^{2}(X\times \widetilde{\Sigma})$ such that $(id_X, i\circ\tau)_*\widetilde{\Gamma}_2=\Gamma_2$ in $\mathrm{CH}^n(X\times X)$, where $i:\Sigma\hookrightarrow X$ is the inclusion. To get a Chow-theoretic decomposition of the diagonal, it is sufficient to prove that $\widetilde{\Gamma}_2$ (hence $\Gamma_2$ in $\mathrm{CH}^n(X\times X)$) can be decomposed as $X\times x + Z$ in $\mathrm{CH}^2(X\times\widetilde{\Sigma})$ for a cycle $Z$ supported on $D'\times \widetilde{\Sigma}$, $D'$ being a proper closed subset of $X$. We have the following proposition:
\begin{proposition}\textit{(\cite[Proposition 2.1]{Vois_cub})} Let $Y$ be a smooth projective variety. If $Y$ admits a decomposition of the diagonal modulo algebraic equivalence, that is $$\Delta_Y = Y\times y +Z\ \mathrm{in\ CH}^{dim(Y)}(Y\times Y)/alg$$ with $Z$ supported on $D\times Y$ for some proper closed algebraic subset $D$ of $Y$, then $Y$ admits a Chow-theoretic decomposition of the diagonal.
\end{proposition}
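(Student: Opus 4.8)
The plan is to bootstrap the decomposition modulo algebraic equivalence into a Chow-theoretic one by a formal argument, isolating a single nilpotence statement as the only substantive input. Write the hypothesis as $\Delta_Y = Y\times y + Z + \gamma$ in $\mathrm{CH}^{\dim Y}(Y\times Y)$, where $Z$ is supported on $D\times Y$ and $\gamma$ is algebraically equivalent to $0$. Call a class in $\mathrm{CH}^{\dim Y}(Y\times Y)$ \emph{good} if it is a sum of a class $Y\times\alpha$ (with $\alpha$ a $0$-cycle on $Y$) and a class supported on $D'\times Y$ for some proper closed $D'\subsetneq Y$; these form a subgroup. Restricting such a decomposition to a general fibre $\{p\}\times Y$ shows, exactly as in the standard generic-point argument (cf. Remark \ref{rmk_intro}), that $\Delta_Y$ being good yields universal triviality of $\mathrm{CH}_0(Y)$ together with $\alpha\sim y$, i.e. the desired decomposition. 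So it suffices to prove that $\gamma$ is good.

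First I would establish the formal congruence $A\equiv A\circ\gamma \pmod{\mathrm{good}}$ for every self-correspondence $A\in\mathrm{CH}^{\dim Y}(Y\times Y)$. Since $\Delta_Y$ is the identity for the composition of correspondences, $A = A\circ\Delta_Y = A\circ(Y\times y) + A\circ Z + A\circ\gamma$; the support conditions give $A\circ(Y\times y) = Y\times(A_*y)$, which is good, while $A\circ Z$ is supported on $D\times Y$, hence good. Applying this with $A = \gamma^{\circ k}$ and iterating gives $\gamma\equiv\gamma^{\circ 2}\equiv\cdots\equiv\gamma^{\circ N}\pmod{\mathrm{good}}$ for every $N\geq 1$. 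Thus $\gamma$ is good as soon as $\gamma^{\circ N}$ is good — in particular zero — for some $N$.

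The remaining step, which I expect to be the crux, is the nilpotence of the algebraically trivial self-correspondence $\gamma$: that $\gamma^{\circ N}=0$ in $\mathrm{CH}^{\dim Y}(Y\times Y)$ for $N\gg 0$. This is precisely where algebraic, rather than merely homological, triviality is indispensable: a homologically trivial correspondence need not be nilpotent (the Kimura--O'Sullivan circle of questions), whereas a cycle algebraically equivalent to $0$ is expected to be $\otimes$-nilpotent (Voevodsky), and $\otimes$-nilpotence implies nilpotence under composition. Concretely I would parametrize the equivalence as $\gamma = \Gamma_\infty - \Gamma_0$ over a smooth projective curve $C$, so that the iterated external powers $\gamma^{\boxtimes N}$ factor through $\mathrm{Pic}^0(C)^{\otimes N}$, and exploit this to force $\gamma^{\circ N}=0$; feeding this back into the congruence above makes $\gamma$, and hence $\Delta_Y$, good. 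The reduction being purely formal, essentially all the difficulty is concentrated in this nilpotence input.
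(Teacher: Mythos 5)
Your argument is correct and is essentially the paper's own proof: both isolate the algebraically trivial correspondence $\gamma=\Delta_Y-Y\times y-Z$, invoke the Voevodsky--Voisin nilpotence theorem for algebraically trivial self-correspondences to get $\gamma^{\circ N}=0$, and conclude by the same formal computation of compositions (your ``good modulo'' congruence is just a tidier packaging of the paper's observation that every power of $\gamma$ has the form $\Delta_Y-Y\times y+Z'$ with $Z'$ supported on $D\times Y$). The only difference is that the paper simply cites the nilpotence theorem rather than sketching its proof as you do.
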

We include the proof for the sake of completeness:
\begin{proof}[Sketch of proof] The result is obtained as a consequence of a nilpotence result of Voevodsky (\cite{Voe}) and Voisin (\cite{Vois_nilp}) asserting that given a self-correspondence $\Gamma\in \mathrm{CH}^n(Y\times Y)$ that is algebraically trivial, there is an integer $N$ such that $\Gamma^{\circ N}=0$ in $\mathrm{CH}^n(Y\times Y)$.\\
\indent Applying the nilpotence result to the algebraically trivial self-correpondence $(\Delta_Y-(Y\times y) + Z)$ yields the result since, writting down the different terms, using the fact that $Z\circ (Y\times y)=0$ and $(\Delta_Y-Y\times y)\circ (\Delta_Y-Y\times y)=\Delta_Y-Y\times y$, we see that any power of $(\Delta_Y-(Y\times y) + Z)$ is of the form $\Delta_Y-(Y\times y) + Z'$ for a cycle $Z'$ supported on $D\times Y$.
\end{proof}
\textit{}\\
We conclude from this proposition that in order to get the decomposition of the diagonal of $X$, it suffices to decompose $\widetilde{\Gamma}_2$  as $X\times x + Z$ for a cycle $Z$ supported on $D'\times \widetilde{\Sigma}$, $D'$ being a proper closed subset of $X$, in $\mathrm{CH}^2(X\times\widetilde{\Sigma})/alg$. Indeed, this will decompose $\Gamma_2$ as $X\times x+Z$, with $Z$ supported on $D\times X$ in $\mathrm{CH}^n(X\times X)/alg$.\\
\indent Now, since $\mathrm{CH}_0(X)=\mathbb Z$, we have $\mathrm{CH}_0(X\times \widetilde{\Sigma})\simeq \mathrm{CH}_0(\widetilde{\Sigma})$ i.e. the group of $0$-cycles of $X\times\widetilde{\Sigma}$ supported on a $2$-dimensional subscheme of $X\times \widetilde{\Sigma}$. Then, by work of Bloch and Srinivas (\cite[Theorem 1 (ii)]{Bl-Sr}), algebraic and homological equivalences coincide on $\mathrm{CH}^2(X\times\widetilde\Sigma)$ so that, in order to get the equality $\Gamma_2=X\times x +Z$ in $\mathrm{CH}^2(X\times\widetilde{\Sigma})/alg$, it is sufficient to prove the corresponding cohomological decomposition, that is to prove that $[\widetilde{\Gamma}_2]$ can be written $[X\times x] + [Z]$ in $H^4(X\times\widetilde{\Sigma})$ for a cycle $Z$ supported on $D'\times \widetilde{\Sigma}$, $D'$ being a proper closed subset of $X$.\\
\indent We have the K\"unneth exact sequence: $$0\rightarrow \bigoplus_{i+j=4}H^i(X,\mathbb Z)\otimes H^j(\widetilde\Sigma,\mathbb Z)\rightarrow H^4(X\times\widetilde\Sigma,\mathbb Z)\rightarrow \bigoplus_{i+j=5}\mathrm{Tor}_1(H^i(X,\mathbb Z),H^j(\widetilde\Sigma,\mathbb Z))\rightarrow 0$$ from which, we see, using the fact that the groups $H^{*\leq 1}(*,\mathbb Z)$ are always torsion-free and the assumption $\mathrm{Tors}(H^2(X,\mathbb Z))=0=\mathrm{Tors}(H^3(X,\mathbb Z))$, that $H^4(X\times \widetilde{\Sigma},\mathbb Z)$ admits a K\"unneth decomposition.\\ 

Let us denote $\delta^{i,j}\in H^i(X,\mathbb Z)\otimes H^j(\widetilde\Sigma,\mathbb Z)$ the K\"unneth components of $[\widetilde{\Gamma}_2]$. They are Hodge classes since the projection on K\"unneth types are morphism of Hodge structures.\\
\indent The component $\delta^{3,1}$ is $0$ since by assumption (\ref{cond2_thm1}), $H^3(X,\mathbb Z)=0$.\\
\indent The component $\delta^{0,4}$ is of the form $[X\times z]$ for a $0$-cycle $z$ on $\widetilde{\Sigma}$.\\
\indent Let us write $\widetilde{\Sigma}=\sqcup_i \widetilde{\Sigma}_i$ where the $\widetilde{\Sigma}_i$ are smooth connected surfaces. Since $H^0(\widetilde{\Sigma},\mathbb Z)=\oplus_i\mathbb Z[\widetilde{\Sigma}_i]$, for each $i$, the component $\delta_i^{4,0}\in H^4(X,\mathbb Z)\otimes H^0(\widetilde{\Sigma}_i,\mathbb Z)$ can be written $pr_1^*\alpha_i$ for a cohomology class $\alpha_i\in H^4(X,\mathbb Z)$ and by projection formula $pr_{1,*}(\delta_i^{4,0}\cup [X\times x_i])= pr_{1,*}(pr_1^*\alpha_i\cup [X\times x_i])=\alpha_i$ for a (any) point $x_i\in\widetilde{\Sigma}_i$. Now, we have $[\widetilde{\Gamma}_2']\cup[X\times x_i]=[\widetilde{\Gamma}_2'\cdot (X\times x_i)]=\delta_i^{4,0}\cup [X\times x_i]$; applying $pr_{1,*}$ to these equalities yields $[pr_{1,*}(\widetilde{\Gamma}_2'\cdot(X\times x_i))]=\alpha_i$ i.e. $\delta_i^{4,0}=pr_1^*\alpha_i$ is algebraic and supported on $pr_{1,*}(\widetilde{\Gamma}_2'\cdot(X\times x_i))\times \widetilde{\Sigma}_i$ and $pr_{1,*}(\widetilde{\Gamma}_2'\cdot(X\times x_i))$ has codimension $2$ in $X$, in particular it does not dominate $X$. So the component $\delta^{4,0}=\sum_i\delta_i^{4,0}$ is algebraic and represented by a cycle which does not dominate $X$ by the first projection.\\
\indent As $\mathrm{CH}_0(X)\simeq\mathbb Z$, by \cite[Proposition 1]{Bl-Sr}, there is an integer $N\neq 0$ such that 
\begin{equation}\label{eq_decomp_rat}
N\Delta_X=N(X\times x)+Z\ \mathrm{in\ CH}^n(X\times X)
\end{equation} where $Z$ is supported on $D'\times X$ for some proper closed subset $D'$ of $X$. Looking at the action on $H^1(X,\mathbb Z)$, we see that $H^1(X,\mathbb Z)$ is a torsion group annihilated by $N$ but since $H^1(X,\mathbb Z)$ is torsion-free, $H^1(X,\mathbb Z)=0$. Hence, $\delta^{1,3}=0$. Letting the correspondences of (\ref{eq_decomp_rat}) act on the complex vector space $H^0(X,\Omega_X^2)$ we see that it is annihilated by $N$ i.e. it is $0$, so that by Lefschetz theorem on $(1,1)$ classes, $H^2(X,\mathbb Z)$ is algebraic. So the Hodge class $\delta^{2,2}$ belongs to $H^{1,1}(X)\otimes H^{1,1}(\widetilde{\Sigma})$; it is thus algebraic and of the form $[\sum_iD_i\times C_i]$ where the $D_i$ are divisors on $X$ and the $C_i$ are curves on $\widetilde{\Sigma}$, in particular it does not dominate $X$ by the first projection.
\end{proof}

\begin{proof}[Proof of Theorem \ref{thm_main2}]Let us assume that $X$ satisfies condition (*) and has essential $\mathrm{CH}_0$-dimension $\leq 2$. We thus have:
\begin{equation}\label{eq_decomp_approx'} \Delta_X=\Gamma_1 + \Gamma_2\ \mathrm{in\ CH}^n(X\times X)
\end{equation} 
where $\Gamma_1$ is supported on $D\times X$ for some proper closed subset $D\subset X$ and $\Gamma_2$ is supported $X\times \Sigma$.\\
\indent Let us write $\widetilde{\Sigma}=\sqcup_i \widetilde{\Sigma}_i$ where the $\widetilde{\Sigma}_i$ are smooth connected surfaces. Choose a point $\sigma_i\in \widetilde{\Sigma}_i$. For each $i$, the cycle $\widetilde{\Gamma}_{2,i}:=\widetilde{\Gamma}_{2|\widetilde{\Sigma}_i}\in \mathrm{CH}^2(X\times \widetilde{\Sigma}_i)$ can be written as $Z_i\times\widetilde{\Sigma}_i+ \widetilde{\Gamma}_{2,i,alg}$ where $Z_i\in \mathrm{CH}^2(X)$ is defined as $\widetilde{\Gamma}_{2,i,*}(\sigma_i)$ and $\widetilde{\Gamma}_{2,i,alg}$ is a family of cycles algebraically equivalent to $0$ on $X$ parametrized by $\widetilde{\Sigma}_i$. By condition (*) applied to each field $\mathbb C(\widetilde{\Sigma}_i)$, we get a cycle $\mathcal Z_i\in Pic(Y\times \widetilde{\Sigma}_i)$ such that $\mathcal Z_i - \widetilde{\Gamma}_{2,i,alg}$ vanishes in $\mathrm{CH}^2(X\times U_i)$ where $U_i\subset \widetilde{\Sigma}_i$ is a dense open subset. By the localization exact sequence, we conclude that the cycle $\mathcal Z_i - \widetilde{\Gamma}_{2,i,alg}$ is supported on $\cup_j X\times C_{i,j}$ where $\cup_j C_{i,j}=\widetilde{\Sigma}_i\backslash U_i$. Putting everything together, we conclude that $\Delta_X = Z' +Z''$ where $Z'$ is supported  on $D'\times X$ and $Z''$ is supported on $X\times C$ where $C=\cup_{i,j} C_{i,j}$. We thus conclude that the essential $\mathrm{CH}_0$-dimension of $X$ is $\leq 1$ and the proof is concluded by applying Theorem \ref{thm_CT_1}.
\end{proof}
\section{Universal generation of codimension $2$ cycles}
In this section, we discuss the relation of condition (*) to the existence of a universal codimension $2$ cycle.\\

%\subsection{Relation to the universal codimension $2$ cycle}
\indent Let $X$ be a smooth projective complex variety satisfying $\mathrm{CH}_0(X)=\mathbb Z$. Then by a theorem of Roitman $H^{i,0}(X)=0$ for any $i>0$, so that the Hodge structure on $H^3(X,\mathbb Z)$ has level $1$ and $H^{2n-1}(X,\mathbb Q)=0$. So $H^3(X,\mathbb Z)_{prim}:= Ker(c_1(\mathcal O_X(1))^{n-3+1}\cup:H^3(X,\mathbb Z)_{/Tors}\rightarrow H^{2n-1}(X,\mathbb Z)_{/Tors})$ is the whole of $H^3(X,\mathbb Z)_{/Tors}$ so that the bilinear form defined on $H^3(X,\mathbb Z)_{/Tors}$ using a polarization $\mathcal O_X(1)$, polarizes the intermediate Jacobian for codimension $2$ cycles $J^3(X)$ is an abelian variety. By work of Bloch and Srinivas, we have in our setting $\mathrm{CH}^2(X)_{alg}= \mathrm{CH}^2(X)_{hom}\simeq J^3(X)(\mathbb C)$. The condition $(*)$ is related to codimension $2$-cycles. In \cite[Theorem 2.1]{Vois_unirat}, Voisin exhibited a (birationally invariant) necessary condition for stable rationality, namely the existence of a universal codimension $2$ cycle i.e. the existence of a correspondence $\mathcal Z\in \mathrm{CH}^2(J^3(X)\times X)$ such that the induced Abel-Jacobi morphism $\Phi_{\mathcal Z}:J^3(X)\rightarrow J^3(X)$, given by $t\mapsto \rho(\mathcal Z_t-\mathcal Z_{t_0})$, where $\rho:\mathrm{CH}^2(X)_{alg}\rightarrow J^3(X)$ is the natural regular morphism in the sense of Murre (\cite{Murre_cd2}), is the identity. We have the following relation with condition (*):
\begin{proposition}\label{prop_scinde1} Let $X$ be a smooth projective complex variety satisfying $\mathrm{CH}_0(X)=\mathbb Z$ and condition $(*)$. Assume moreover that $j_*:\mathrm{Pic}^0(\widetilde{Y})\rightarrow J^3(X)\simeq \mathrm{CH}^2(X)_{alg}$ is split. Then there is a universal codimension $2$ cycle $\mathcal{Z}\in \mathrm{CH}^2(J^3(X)\times X)$.
\end{proposition}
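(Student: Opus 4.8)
The plan is to manufacture the universal cycle directly from the universal (Poincaré) divisor of $\widetilde{Y}$, transport it to $X$ by $j_*$, and then use the splitting to move it onto $J^3(X)$. Concretely, I would start from a Poincaré line bundle $\mathcal P$ on $\mathrm{Pic}^0(\widetilde Y)\times\widetilde Y$ and set $D:=c_1(\mathcal P)\in \mathrm{CH}^1(\mathrm{Pic}^0(\widetilde Y)\times\widetilde Y)$; by the defining property of $\mathcal P$, the correspondence $D$ realizes the identity of $\mathrm{Pic}^0(\widetilde Y)=J^1(\widetilde Y)$, via $t\mapsto[\mathcal P_t]$.

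First I would push $D$ forward along $\mathrm{id}\times j$ to obtain the codimension $2$ correspondence
$$W:=(\mathrm{id}\times j)_*D\ \in\ \mathrm{CH}^2(\mathrm{Pic}^0(\widetilde Y)\times X),$$
the codimension being $2$ because $\dim\widetilde Y=n-1$, so the Gysin pushforward $j_*$ raises codimension by $1$. For a point $t$ one has $W_t=j_*[\mathcal P_t]$, and since $\mathcal P_t-\mathcal P_{t_0}$ is algebraically trivial on $\widetilde Y$ its image $W_t-W_{t_0}=j_*(\mathcal P_t-\mathcal P_{t_0})$ lies in $\mathrm{CH}^2(X)_{alg}$; hence $W$ induces a morphism of abelian varieties $\Phi_W:\mathrm{Pic}^0(\widetilde Y)\to J^3(X)$, $t\mapsto \rho(W_t-W_{t_0})$.

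The crux is to identify $\Phi_W$ with $j_*$. I would do this by combining two compatibilities: that $D$ realizes the identity of $\mathrm{Pic}^0(\widetilde Y)=J^1(\widetilde Y)$, and that the regular homomorphism $\rho$ is compatible with proper pushforward, so that $\rho\circ j_*$ on algebraically trivial codimension-$1$ cycles agrees with the map on intermediate Jacobians induced by the Gysin morphism of Hodge structures $j_*:H^1(\widetilde Y)\to H^3(X)$. Granting this, $\Phi_W(t)=\rho\big(j_*(\mathcal P_t-\mathcal P_{t_0})\big)=j_*(t-t_0)$, i.e. $\Phi_W=j_*$ as homomorphisms once the base points are normalized. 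This matching of the correspondence-theoretic Abel--Jacobi map with the Gysin pushforward on intermediate Jacobians is, I expect, the only genuinely non-formal point, and it is precisely where the functoriality of Murre's regular homomorphism is used.

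Finally, let $s:J^3(X)\to\mathrm{Pic}^0(\widetilde Y)$ be the splitting, a homomorphism of abelian varieties with $j_*\circ s=\mathrm{id}_{J^3(X)}$. Since $s$ is a morphism of smooth projective varieties, I can form $\mathcal Z:=(s\times\mathrm{id}_X)^*W\in \mathrm{CH}^2(J^3(X)\times X)$. Restriction to fibres commutes with this base change, so $\mathcal Z_t=W_{s(t)}$, whence $\Phi_{\mathcal Z}=\Phi_W\circ s=j_*\circ s=\mathrm{id}_{J^3(X)}$. Thus $\mathcal Z$ is a universal codimension $2$ cycle, as desired. Everything except the identification $\Phi_W=j_*$ is formal: the codimension count, the algebraic triviality of the fibre differences, and the fibrewise computation of $\Phi_{\mathcal Z}$ are all routine once that identification is in hand.
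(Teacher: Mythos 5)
Your proposal is correct and takes essentially the same route as the paper: both transport the Poincar\'e divisor of $\widetilde Y$ into $\mathrm{CH}^2(-\times X)$ via $(\mathrm{id},j)_*$ and restrict to $J^3(X)$ via the splitting $s$, the only difference being the (immaterial, by base change) order of the pullback along $s$ and the pushforward along $j$. The compatibility $\Phi_{\mathcal Z}=j_*\circ s=\mathrm{id}$, which you rightly flag as the one non-formal point, is exactly what the paper invokes ``by construction.''
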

\begin{proof} As $j_*$ is split, the splitting morphism gives an imbedding $s:J^3(X)\hookrightarrow \mathrm{Pic}^0(\widetilde{Y})$. Denoting $\mathcal P$ the Poincar\'e divisor on $\mathrm{Pic}^0(\widetilde{Y})\times \widetilde{Y}$, set $\mathcal Z= (id_{J^3(X)},j)_*(s,id_{\widetilde{Y}})^*\mathcal P$ in $\mathrm{CH}^2(J^3(X)\times X)$. Then, by construction, $\mathcal Z$ is a universal codimension $2$ cycle. Indeed, the Abel-Jacobi morphism $\Phi_{\mathcal Z}:J^3(X)\rightarrow J^3(X)$ is just given by $j_*\circ s_*$ which is the identity by definition of $s$. 
\end{proof}

We have this other proposition relating condition (*) to the existence of a universal codimension $2$ cycle:
\begin{proposition}\label{prop_facteur_jacobienne} Let $X$ be a smooth projective complex variety satisfying $\mathrm{CH}_0(X)=\mathbb Z$. Assume $J^3(X)$ is a direct factor of a sum of Jacobian of curves $\oplus_i J(C_i)$. Then the existence a universal codimension $2$ cycle $\mathcal{Z}\in \mathrm{CH}^2(J^3(X)\times X)$ implies condition $(*)$.
\end{proposition}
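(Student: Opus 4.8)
The plan is to manufacture the pair $(\widetilde Y, j)$ demanded by condition (*) directly out of the universal cycle $\mathcal Z$ and the splitting of $J^3(X)$ off $\oplus_i J(C_i)$, and then to verify that the resulting $j_*$ is \emph{universally} surjective. Write $p:\oplus_i J(C_i)\to J^3(X)$ and $\iota:J^3(X)\hookrightarrow\oplus_i J(C_i)$ for the projection and section provided by the direct-factor hypothesis, so $p\circ\iota=\mathrm{id}$, and let $b_i:J(C_i)\to J^3(X)$ be the components of $p$. Since $J(C_i)$ is the Albanese variety of $C_i$, each $b_i$ is induced by a pointed morphism $\phi_i:C_i\to J^3(X)$. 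I would then pull the universal cycle back along these, setting $\Gamma_i:=(\phi_i\times\mathrm{id}_X)^*\mathcal Z\in\mathrm{CH}^2(C_i\times X)$; as $\dim(C_i\times X)=n+1$, each $\Gamma_i$ is a cycle of dimension $n-1$.

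The next step is to geometrize these correspondences. Discarding the components of $\Gamma_i$ contracted to a point of $C_i$ (a correspondence supported on a fibre $\{c\}\times X$ acts as $0$ on $\mathrm{CH}_0(C_i)$, since a $0$-cycle may be moved off $c$), let $W_{ik}\subset C_i\times X$ be the remaining horizontal components, take desingularizations $\widetilde W_{ik}\to W_{ik}$, and form $\widetilde Y=\bigsqcup_{i,k}\widetilde W_{ik}$, a smooth projective (possibly disconnected, as with the paper's $\widetilde\Sigma$) variety of pure dimension $n-1$, with projections $\pi_{ik}:\widetilde W_{ik}\to C_i$ and $j_{ik}:\widetilde W_{ik}\to X$ assembled into $j:\widetilde Y\to X$. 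The computation that drives everything is that, by the defining property $\Phi_{\mathcal Z}=\mathrm{id}$ of the universal cycle together with the projection formula, the homomorphism $\sum_k m_{ik}\,(j_{ik,*}\circ\pi_{ik}^*):\mathrm{Pic}^0(C_i)\to\mathrm{CH}^2(X)_{alg}$ (the $m_{ik}$ being the multiplicities of $\Gamma_i$) coincides with $b_i$. Summing over $i$ and using that $p=\sum_i b_i$ is surjective, I obtain that $j_*:\mathrm{Pic}^0(\widetilde Y)\to\mathrm{CH}^2(X)_{alg}$ is surjective over $\mathbb C$; the integer multiplicities are harmless because only the images of the $j_{ik,*}$ enter the surjectivity argument.

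It remains to upgrade this to universal surjectivity, and here I would use two features. First, for curves the cycle theory is well behaved over every field: $\mathrm{CH}_0(C_{i,L})_{\deg 0}=\mathrm{Pic}^0(C_{i,L})$ with Abel--Jacobi an isomorphism for all $L$, so no kernel intervenes on the source, and the section $\iota$ keeps $p$ surjective on $L$-points. Second, and decisively, the universal cycle $\mathcal Z$ should let one reproduce an arbitrary family of algebraically trivial codimension $2$ cycles at the level of rational equivalence, not merely modulo Abel--Jacobi: given such a family $\mathcal W$ over a base $B$ with Abel--Jacobi morphism $\phi_{\mathcal W}:B\to J^3(X)$, lifting $\phi_{\mathcal W}$ through $\iota$ to $\oplus_i J(C_i)$ and pulling $\mathcal Z$ back along the $\phi_i$ would express $\mathcal W$, generically on $B$, as $j_*$ of a morphism $\mathcal D:B\to\mathrm{Pic}^0(\widetilde Y)$. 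By the reformulation of condition (*) as the property that every family of algebraically trivial codimension $2$ cycles factors generically through $j_*$ (established when (*) was introduced, preceding Theorem \ref{thm_main2}), this is exactly condition (*).

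The step I expect to be the main obstacle is precisely this last passage from equality of Abel--Jacobi invariants to equality of cycle classes over the generic point, i.e. controlling the kernel of the Abel--Jacobi map $\rho_L:\mathrm{CH}^2(X_L)_{alg}\to J^3(X)(L)$ over function fields $L$. The entire reason for assuming a genuine universal cycle (rather than merely that $p$ is split surjective at the abelian-variety level, which only yields the factorization of Abel--Jacobi \emph{invariants}) is to annihilate this kernel in families. I would therefore isolate, as a separate lemma, the statement that the universal cycle reproduces every family of algebraically trivial codimension $2$ cycles up to rational equivalence on a dense open of the base, and prove it by combining the universality of $\mathcal Z$ with the Bloch--Srinivas decomposition of the diagonal of $X$ (available since $\mathrm{CH}_0(X)=\mathbb Z$) and, if needed, the Voevodsky--Voisin nilpotence theorem already invoked in the excerpt, so as to propagate the identity $\rho^{-1}=s$ valid over $\mathbb C$ into a cycle-theoretic identity over all $L$.
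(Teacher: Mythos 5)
Your proof follows essentially the same route as the paper's: both pull the universal cycle back along $C_i\hookrightarrow J(C_i)\stackrel{p}{\rightarrow}J^3(X)$ to get a correspondence in $\mathrm{CH}^2(\sqcup_i C_i\times X)$ supported over an $(n-1)$-dimensional subvariety of $X$, and both use the universal property of the Albanese $J(C_i)=\mathrm{Alb}(C_i)$ to factor $p$ --- and hence, via the section, the Abel--Jacobi morphism of any family --- through $j_*:\mathrm{Pic}^0(\widetilde{D})\rightarrow J^3(X)$; your desingularizing the graph components rather than the pushed-forward divisor $pr_{2,*}(\mathcal Y)$ is an immaterial variant. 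The ``main obstacle'' you isolate (upgrading equality of Abel--Jacobi invariants to equality in $\mathrm{CH}^2(X_L)$ over function fields) is not given a separate argument in the paper either, which absorbs it into its earlier reformulation of $(*)$ as generic factorization of families together with the Bloch--Srinivas isomorphism $\mathrm{CH}^2(X)_{alg}\simeq J^3(X)(\mathbb C)$, so your added caution is welcome but does not amount to a different approach.
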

\begin{proof} Let us denote $p:\oplus_i J(C_i)\rightarrow J^3(X)$ the projection to $J^3(X)$ and $s:J^3(X)\rightarrow \oplus_i J(C_i)$ a section. Using the morphisms 
$$C_i\stackrel{j_i}{\hookrightarrow}J(C_i)\stackrel{p}{\rightarrow}J^3(X)$$
we get a correspondence $\mathcal{Y}\in \mathrm{CH}^2(\sqcup_iC_i\times X)$, defined on each $C_i\times X$ as $(p\circ j_i, id_X)^*\mathcal Z$, such that the induced Abel-jacobi morphism $\Phi_{\mathcal Y}:\sqcup_iC_i\rightarrow J^3(X)$ coincides with $\oplus_i p\circ j_i$. Let us denote $D=pr_{2,*}(\mathcal Y)\in \mathrm{CH}^1(X)$ and $\widetilde{D}$ a desingularization of (a divisor in the class) $D$. We have a morphism $j:Pic^0(\widetilde{D})\rightarrow J^3(X)$ and Abel-Jacobi morphism $\Phi_{\mathcal Y}:\sqcup_iC_i\rightarrow J^3(X)$ naturally factors through the morphism $j$ so that, by the universal property of the Albanese variety, the morphism $p:\oplus_i J(C_i)\rightarrow J^3(X)$ also factors through $j$.\\
\indent Now, let $\mathcal K\in \mathrm{CH}^2(W\times X)$ be a family of codimension $2$ cycles of $X$ parametrized by a smooth quasi-projective base $W$. Let us consider the Abel-Jacobi morphism $\Phi_{\mathcal K}:W\rightarrow J^3(X)$; we have the equality $p\circ s\circ\Phi_{\mathcal K}=\Phi_{\mathcal K}$ but $p$ factors through $j:Pic^0(\widetilde{D})\rightarrow J^3(X)$.
\end{proof}
\begin{remarque} \normalfont Instead of the existence of a universal codimension $2$ cycle, we can consider weaker conditions. Let us introduce the following conditions:
\begin{enumerate}\item there exist a smooth projective variety $W$ and a cycle $\mathcal Z\in \mathrm{CH}^2(W\times X)$ such that the induced morphism $\mathcal Z_*:\mathrm{Alb}(W)\rightarrow J^3(X)$ is an isomorphism;
\item there is a universally generating cycle of codimension $2$ i.e. there exist a quasi-projective variety $W$ and a cycle $\mathcal Z\in\mathrm{CH}^2(W\times X)$ such that $$\mathcal Z_*:\mathrm{CH}_0(W)\rightarrow \mathrm{CH}^2(X)_{alg}$$ is universally surjective.
\end{enumerate} 
But the hypotheses we have to add to get condition (*) are not clear.
\end{remarque}

\section{Applications}\label{applications}
\subsection{The case of threefolds} As we have seen in Remark \ref{rmk_intro}, when $X$ has dimension $3$, the fact that the $\mathrm{CH}_0$ group of $X$ is universally supported on a surface $\Sigma$ can be written in $\mathrm{CH}^3(X\times X)$, as 
\begin{equation}\label{eq_decomp_approx2}
\Delta_X=\Gamma_1 + \Gamma_2
\end{equation} where $\Gamma_2$ is supported $X\times \Sigma$ and $\Gamma_1$ is supported on $D\times X$ for some proper closed subset $D\subset X$, in particular $D$ is a $2$-dimensional closed subscheme of $X$. We thus have a symmetric situation in term of dimension of the supports of the $\Gamma_i$. We recall that in this setting, $J^3(X)$ is a principally polarized abelian variety, whose polarization is given by the unimodular intersection form $\langle\cdot,\cdot\rangle_{J^3(X)}$ induced by the intersection form on $H^3(X,\mathbb Z)$ via the isomorphism $H_1(J^3(X),\mathbb Z)\simeq H^3(X,\mathbb Z)_{/Tors}$. We have the following result:
%which will be used to produce interesting examples of varieties for which Theorem \ref{thm_main} yields a non trivial conclusion:
\begin{theoreme}\label{thm_symm3} Let $X$ be a smooth threefold satisfying $\mathrm{CH}_0(X)=\mathbb Z$ and whose essential $\mathrm{CH}_0$-dimension is $\leq 2$. Assume that any endomomorphism of $J^3(X)$ is self-adjoint for $\langle\cdot,\cdot\rangle_{J^3(X)}$. Then $X$ satisfies condition $(*)$. So that, by Theorem \ref{thm_main2}, the essential $\mathrm{CH}_0$-dimension of $X$ is $0$.
\end{theoreme}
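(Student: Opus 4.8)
The plan is to verify condition $(*)$ directly, using the reformulation recorded just before Theorem \ref{thm_main2}: it suffices to produce a surface $\widetilde{Y}$ with a morphism $j:\widetilde{Y}\to X$ such that for \emph{every} family of algebraically trivial codimension $2$ cycles $\mathcal Z\in\mathrm{CH}^2(B\times X)_{alg}$ over a smooth projective base $B$, the associated morphism $\Phi_{\mathcal Z}:B\to J^3(X)$ factors as $j_*\circ\mathcal D$ for some morphism $\mathcal D:B\to\mathrm{Pic}^0(\widetilde{Y})$. As in the proof of Theorem \ref{thm_main2}, the hypothesis that $X$ has essential $\mathrm{CH}_0$-dimension $\leq 2$ provides, since $n=3$, a symmetric decomposition $\Delta_X=\Gamma_1+\Gamma_2$ in $\mathrm{CH}^3(X\times X)$ with $\Gamma_1$ supported on $D\times X$ and $\Gamma_2$ supported on $X\times\Sigma$, where $D$ and $\Sigma$ are surfaces. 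I desingularize them to smooth projective surfaces $\widetilde{D}$ and $\widetilde{\Sigma}$, lift the cycles accordingly, and set $\widetilde{Y}=\widetilde{D}\sqcup\widetilde{\Sigma}$ with the tautological map $j:\widetilde{Y}\to X$, so that $\mathrm{Pic}^0(\widetilde{Y})=\mathrm{Pic}^0(\widetilde{D})\times\mathrm{Pic}^0(\widetilde{\Sigma})$ and $j_*=(j_{D*},j_{\Sigma*})$.

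Since $\mathrm{CH}_0(X)=\mathbb Z$, Bloch--Srinivas identifies $\mathrm{CH}^2(X)_{alg}$ with $J^3(X)$ equivariantly for the action of correspondences, so that for any self-correspondence $\Gamma$ one has $\Phi_{\Gamma\circ\mathcal Z}=\Gamma_*\circ\Phi_{\mathcal Z}$, where $\Gamma_*$ is the induced endomorphism of $J^3(X)$. Applying this to $\Delta_X=\Gamma_1+\Gamma_2$ gives $\Phi_{\mathcal Z}=\Gamma_{1*}\circ\Phi_{\mathcal Z}+\Gamma_{2*}\circ\Phi_{\mathcal Z}$. The second term is harmless: because $\Gamma_2$ is supported on $X\times\Sigma$, the family $\Gamma_2\circ\mathcal Z$ consists of codimension $2$ cycles supported on $\Sigma$, hence of push-forwards under $j_{\Sigma*}$ of a family of divisors algebraically equivalent to $0$ on $\widetilde{\Sigma}$; this family defines a morphism $\mathcal D_2:B\to\mathrm{Pic}^0(\widetilde{\Sigma})$ with $\Phi_{\Gamma_2\circ\mathcal Z}=j_{\Sigma*}\circ\mathcal D_2$.

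The first term is the crux, and this is where self-adjointness enters. The endomorphism $\Gamma_{1*}$ comes from a correspondence supported on $D\times X$, so its natural factorization runs through the $0$-cycles of $\widetilde{D}$, i.e. through $\mathrm{Alb}(\widetilde{D})$ rather than $\mathrm{Pic}^0(\widetilde{D})$, and does not obviously lift along $j_{D*}$. I would resolve this by passing to the transpose: a standard computation shows that, for the intersection pairing defining the polarization, $\langle\Gamma_{1*}\alpha,\beta\rangle_{J^3(X)}=\langle\alpha,({}^t\Gamma_1)_*\beta\rangle_{J^3(X)}$, so that the Rosati adjoint of $\Gamma_{1*}$ is exactly $({}^t\Gamma_1)_*$. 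The hypothesis that every endomorphism of $J^3(X)$ is self-adjoint then gives $\Gamma_{1*}=({}^t\Gamma_1)_*$ in $\mathrm{End}(J^3(X))$, whence $\Gamma_{1*}\circ\Phi_{\mathcal Z}=({}^t\Gamma_1)_*\circ\Phi_{\mathcal Z}=\Phi_{{}^t\Gamma_1\circ\mathcal Z}$. Now ${}^t\Gamma_1$ is supported on $X\times D$, so the family ${}^t\Gamma_1\circ\mathcal Z$ consists of codimension $2$ cycles supported on $D$, i.e. of push-forwards under $j_{D*}$ of a family of divisors algebraically equivalent to $0$ on $\widetilde{D}$; this defines $\mathcal D_1:B\to\mathrm{Pic}^0(\widetilde{D})$ with $\Phi_{\Gamma_1\circ\mathcal Z}=j_{D*}\circ\mathcal D_1$.

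Putting the two pieces together, $\mathcal D:=(\mathcal D_1,\mathcal D_2):B\to\mathrm{Pic}^0(\widetilde{Y})$ satisfies $j_*\circ\mathcal D=j_{D*}\circ\mathcal D_1+j_{\Sigma*}\circ\mathcal D_2=\Phi_{\mathcal Z}$, which is exactly the factorization asserting condition $(*)$; Theorem \ref{thm_main2} then gives that the essential $\mathrm{CH}_0$-dimension of $X$ is $0$. The main obstacle is the treatment of $\Gamma_1$, and it is the only place the hypothesis is used: one must recognize that the Rosati adjoint of a correspondence-induced endomorphism of $J^3(X)$ is the transpose correspondence, so that self-adjointness lets one trade the ``$0$-cycles on $\widetilde{D}$'' direction (which only lifts to $\mathrm{Alb}(\widetilde{D})$) for the ``divisors on $\widetilde{D}$'' direction (which lifts to $\mathrm{Pic}^0(\widetilde{D})$, as $(*)$ requires). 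One should also check carefully the compatibility $\Phi_{\Gamma\circ\mathcal Z}=\Gamma_*\circ\Phi_{\mathcal Z}$, which rests on the Bloch--Srinivas isomorphism $\mathrm{CH}^2(X)_{alg}\simeq J^3(X)$ being equivariant for correspondences.
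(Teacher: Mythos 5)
Your proof is correct and is essentially the paper's own argument: both decompose $\Delta_X=\Gamma_1+\Gamma_2$, let the identity act on $J^3(X)\simeq\mathrm{CH}^2(X)_{alg}$ through this decomposition, and use the self-adjointness hypothesis to replace the term whose support condition fails by its transpose, which then factors through $\mathrm{Pic}^0$ of a surface, yielding condition $(*)$ and the conclusion via Theorem \ref{thm_main2}. The only (immaterial) difference is that you start from the pushforward action, so it is $\Gamma_{1*}$ that needs the Rosati swap $\Gamma_{1*}=({}^t\Gamma_1)_*=\Gamma_1^*$, whereas the paper starts from the pullback action and swaps $\Gamma_2^*$ into $\Gamma_{2,*}$.
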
 
\begin{proof} Let us consider the decomposition (\ref{eq_decomp_approx2}). For any $z\in \mathrm{CH}^2(X)_{alg}\simeq J^3(X)(\mathbb C)$, letting both sides act on $z$, we get $$z=\Gamma_1^*(z) + \Gamma_2^*(z).$$ By construction, denoting $\widetilde{D}\rightarrow X$ a desingularization (followed by the inclusion) of $D$, $\Gamma_1^*$ factorizes through $j_*:\mathrm{Pic}^0(\widetilde{D})\rightarrow J^3(X)$. Moreover, as $\Gamma_2^*$ is an endomorphism of $J^3(X)$, it is self-adjoint so that $\Gamma_2^*(z)=\Gamma_{2,*}(z)$ and the last term factorizes through $\tilde{i}_*:\mathrm{Pic}^0(\widetilde{\Sigma})\rightarrow J^3(X)$ where $\tilde{i}:\widetilde{\Sigma}\rightarrow X$ is the desingularization (followed by the inclusion) of $\Sigma$. Thus, we see that $j_*+\tilde{i}_*:\mathrm{Pic}^0(\widetilde{D}\sqcup \widetilde{\Sigma})\rightarrow J^3(X)$ is surjective. Now, let $\mathcal Z$ be a family of algebraically trivial codimension $2$ cycles parametrized by a smooth quasi-projective variety $T$. Since the identity on $\mathrm{CH}^2(X)_{alg}$ factors as $\Gamma_{1}^*+\Gamma_{2,*}$, the map $T\rightarrow \mathrm{CH}^2(X)_{alg}$ factors through $\mathrm{Pic}^0(\widetilde{D}\sqcup \widetilde{\Sigma})$. So $X$ satisfies (*). We conclude applying Theorem \ref{thm_main2}.
\end{proof}

%Combining this Theorem with Theorem \ref{thm_main}, we get the following:
%\begin{corollaire}\label{cor_app1} Let $X$ be a smooth threefold satisfying $\mathrm{CH}_0(X)=\mathbb Z$. Assume $\mathrm{Tors}(H^*(X,\mathbb Z))=0$ and any endomomorphism of $J^3(X)$ is self-adjoint for $<\cdot,\cdot>_{J^3(X)}$. Then the essential $\mathrm{CH}_0$-dimension of $X$ is either $0$ or $3$.
%\end{corollaire}
%\begin{proof} Suppose $X$ has essential $\mathrm{CH}_0$-dimension $< 3$. By Theorem \ref{thm_symm3}, $X$ satisfies condition (*). Moreover as $\mathrm{Tors}(H^*(X,\mathbb Z))=0$, by Theorem \ref{thm_main}, $X$ admits a Chow-theoretic decomposition of the diagonal.
%\end{proof}
As an application of Theorem \ref{thm_symm3}, let us state the following result:
\begin{corollaire}\label{prop_quart} The essential $\mathrm{CH}_0$-dimension of a very general Fano complete intersection of dimension $3$ is either $0$ or $3$. In particular a very general quartic threefold has essential $\mathrm{CH}_0$-dimension equal to $3$.
%\indent (ii) The essential $\mathrm{CH}_0$-dimension of the very general quartic double solid is equal $3$. The same is true for the very general sextic double solid.
\end{corollaire}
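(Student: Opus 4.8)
The plan is to deduce the statement from Theorem \ref{thm_symm3}, whose only nontrivial hypothesis is that every endomorphism of the intermediate Jacobian $J^3(X)$ be self-adjoint for the intersection pairing $\langle\cdot,\cdot\rangle_{J^3(X)}$. First I record that a smooth Fano complete intersection threefold $X$ is rationally connected (by Campana and Koll\'ar--Miyaoka--Mori), hence satisfies $\mathrm{CH}_0(X)=\mathbb Z$; by the theorem of Roitman recalled at the start of the previous section, $H^{3,0}(X)=0$, so $J^3(X)$ is a principally polarized abelian variety and the full setup of Theorem \ref{thm_symm3} is available. The essential $\mathrm{CH}_0$-dimension is trivially $\leq 3=\dim X$, so it suffices to prove the dichotomy by showing that essential $\mathrm{CH}_0$-dimension $\leq 2$ forces essential $\mathrm{CH}_0$-dimension $0$; note that the intermediate value $1$ is already collapsed to $0$ by Theorem \ref{thm_CT_1}.

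The heart of the matter is verifying self-adjointness for the very general member, which I would do by proving $\mathrm{End}(J^3(X))=\mathbb Z$. The input is bigness of the monodromy: along the universal family of smooth complete intersections of the given multidegree, the monodromy representation on $H^3(X,\mathbb Q)$ has image Zariski-dense in the symplectic group $\mathrm{Sp}(H^3(X,\mathbb Q),\langle\cdot,\cdot\rangle)$, by Beauville's determination of the monodromy of universal families of complete intersections (the multidegrees at hand, and the quartic in particular, lying outside the short list of exceptional cases). By Deligne's theorem of the fixed part, together with the normality of the algebraic monodromy group inside the derived Mumford--Tate group, the Mumford--Tate group of the Hodge structure $H^3(X,\mathbb Q)$ of the very general member is the full symplectic group; hence the algebra of Hodge endomorphisms $\mathrm{End}_{\mathrm{HS}}(H^3(X,\mathbb Q))$ equals the $\mathrm{Sp}$-invariants of $H^3\otimes (H^3)^\vee$, namely $\mathbb Q$. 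Through the isomorphism $H_1(J^3(X),\mathbb Z)\simeq H^3(X,\mathbb Z)_{/\mathrm{Tors}}$ this gives $\mathrm{End}(J^3(X))\otimes\mathbb Q=\mathbb Q$, i.e. $\mathrm{End}(J^3(X))=\mathbb Z$. Every endomorphism of $J^3(X)$ is then a scalar multiple of the identity, so it is manifestly self-adjoint for any bilinear form, and the hypothesis of Theorem \ref{thm_symm3} is satisfied.

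With self-adjointness in hand, Theorem \ref{thm_symm3} shows that if the essential $\mathrm{CH}_0$-dimension of the very general $X$ is $\leq 2$ then it is $0$; combined with the trivial bound $\leq 3$, the essential $\mathrm{CH}_0$-dimension is $0$ or $3$. For the final assertion it remains to exclude the value $0$ for the very general quartic threefold $X_4\subset\mathbb P^4$, that is, to show that its $\mathrm{CH}_0$ group is not universally trivial. This is exactly the content of the theorem of Colliot-Th\'el\`ene and Pirutka: a suitable singular quartic threefold admits a resolution carrying a nontrivial unramified cohomology (equivalently, $2$-torsion) class, which obstructs universal triviality of $\mathrm{CH}_0$; and since the existence of a Chow-theoretic decomposition of the diagonal is stable under specialization, the very general quartic threefold likewise fails to have universally trivial $\mathrm{CH}_0$. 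Hence its essential $\mathrm{CH}_0$-dimension is nonzero, and therefore equal to $3$.

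The main obstacle is the middle step, namely passing from the geometric bigness of the monodromy to the lattice-theoretic statement $\mathrm{End}(J^3(X))=\mathbb Z$ for the very general member: here one must both invoke Beauville's monodromy computation (checking that the relevant multidegrees are non-exceptional, so that the image is dense in the full symplectic group rather than in a proper reductive subgroup) and handle the fact that the locus where extra endomorphisms appear is a countable union of proper closed subsets. This last point is precisely why the conclusion is only claimed for very general, and not merely general, members.
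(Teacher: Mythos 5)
Your proposal is correct and follows essentially the same route as the paper: reduce to Theorem \ref{thm_symm3} by showing $\mathrm{End}_{HS}(H^3(X,\mathbb Q))=\mathbb Q\,\mathrm{Id}$ for the very general member via Beauville's density of the monodromy in the symplectic group and the comparison of monodromy with the Mumford--Tate group (the paper cites Voisin's Hodge loci survey for the step you attribute to Deligne's fixed-part theorem, but it is the same input), then settle the quartic case by Colliot-Th\'el\`ene--Pirutka. The only differences are expository: you spell out why scalar endomorphisms are self-adjoint and why the non-exceptionality of the multidegree matters, which the paper leaves implicit.
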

\begin{proof} Let us begin by the following lemma:
\begin{lemme}\label{lem_monodr_quart} For a very general complete intersection $X$ of dimension $3$, we have $End_{HS}(H^3(X,\mathbb Q))=\mathbb QId$ where $End_{HS}(H^3(X,\mathbb Q))$ denote the space of endomorphisms of Hodge structure of $H^3(X,\mathbb Q)$. %The same is true for the very general quartic double solid and for the very general sextic double solid.
\end{lemme}
\begin{proof} The proof works as in \cite[Lemma 5.1]{Vois_cub}. Indeed, by \cite{Beau_monodr}, the monodromy group of a smooth complete intersection of dimension $3$ is Zariski dense in the symplectic group of $H^3(X,\mathbb Q)=H^3(X,\mathbb Q)_{prim}$. By \cite{Vois_Hdg_loci}, the Mumford-Tate group of the Hodge structure on $H^3(X,\mathbb Q)$ contains a finite index sub-group of the monodromy group so that the Mumford-Tate group of $H^3(X,\mathbb Q)$ in the above cases is the symplectic group.
\end{proof}
Let $X$ be a very general Fano complete intersection of dimension $3$, then the endomorphisms of $J^3(X)$ are all self-adjoint for $\langle\cdot,\cdot\rangle_{J^3(X)}$. 
%Moreover we have $\mathrm{Tors}(H^*(X,\mathbb Z))=0$.
Suppose $X$ has essential $\mathrm{CH}_0$-dimension $< 3$. Then by Theorem \ref{thm_symm3}, $X$ has essential $\mathrm{CH}_0$-dimension $0$. So we have the alternative.\\
\indent For quartic threefolds, it was proved in (\cite{CTP_quartiques}) that a very general quartic threefold does not admit a Chow-theoretic decomposition of the diagonal, so its essential $\mathrm{CH}_0$-dimension is $3$.
%\indent The degeneration method introduced by Voisin (\cite{Vois_unirat}) and improved by Colliot-Th\'el\`ene and Pirutka (\cite{CTP_quartiques}) allowed to prove the following results:
%\begin{enumerate}\item Voisin (\cite{Vois_unirat}) proved that a very general quartic double solid does not admit a Chow-theoretic decomposition of the diagonal;
%\item Beauville (\cite{Beau_sextic}) proved that a very general sextic double solid does not admit a Chow-theoretic decomposition of the diagonal;
%\item Colliot-Th\'el`ene and Pirutka (\cite{CTP_quartiques}) proved that a very general quartic threefold does not admit a Chow-theoretic decomposition of the diagonal.
%\end{enumerate}
%So the essential $\mathrm{CH}_0$-dimension of these varieties is $3$.
\end{proof}
\section*{Acknowledgments}
I am grateful to my advisor Claire Voisin for having brought to me this interesting question and for her guidance.

\end{document}